  \newfont\fiverm{cmr5} 
\newtheorem{thm}{Theorem}[section]
\newtheorem{lem}[thm]{Lemma}
\newtheorem{exmp}[thm]{Example}
\newtheorem{algo}[thm]{Algorithm}
\newtheorem{rmk}[thm]{Remark}
\newtheorem{ques}[thm]{Question}
\newtheorem{thm-con}[thm]{Theorem-Conjecture}
\numberwithin{equation}{section}
\theoremstyle{definition}
\newtheorem{defn}[thm]{Definition}
\newcommand{\f}{\Bbb F}
\def\bbordermatrix#1{\begingroup \m@th
  \@tempdima 4.75\p@
  \setbox\z@\vbox{%
    \def\cr{\crcr\noalign{\kern2\p@\global\let\cr\endline}}%
    \ialign{$##$\hfil\kern2\p@\kern\@tempdima&\thinspace\hfil$##$\hfil
      &&\quad\hfil$##$\hfil\crcr
      \omit\strut\hfil\crcr\noalign{\kern-\baselineskip}%
      #1\crcr\omit\strut\cr}}%
  \setbox\tw@\vbox{\unvcopy\z@\global\setbox\@ne\lastbox}%
  \setbox\tw@\hbox{\unhbox\@ne\unskip\global\setbox\@ne\lastbox}%
  \setbox\tw@\hbox{$\kern\wd\@ne\kern-\@tempdima\left[\kern-\wd\@ne
    \global\setbox\@ne\vbox{\box\@ne\kern2\p@}%
    \vcenter{\kern-\ht\@ne\unvbox\z@\kern-\baselineskip}\,\right]$}%
  \null\;\vbox{\kern\ht\@ne\box\tw@}\endgroup}
\begin{document}

\title{A General Construction of Permutation Polynomials of $\f_{q^2}$}

\author[Xiang-dong Hou]{Xiang-dong Hou}
\address{Department of Mathematics and Statistics,
University of South Florida, Tampa, FL 33620}
\email{xhou@usf.edu}

\author[Vincenzo Pallozzi Lavorante]{Vincenzo Pallozzi Lavorante *}
\address{Universit\`a degli Studi di Modena e Reggio Emilia, Italy}
\email{vincenzo.pallozzilavorante@unimore.it}
\thanks{* The research of Vincenzo Palozzi Lavorante was partially supported  by the Italian National Group for Algebraic and Geometric Structures and their Applications (GNSAGA - INdAM)}

\keywords{
finite field, permutation polynomial, self-dual polynomial
}

\subjclass[2010]{11T06, 11T30, 11T55}

\begin{abstract}
Let $r$ be a positive integer, $h(X)\in\Bbb F_{q^2}[X]$, and $\mu_{q+1}$ be the subgroup of order $q+1$ of $\Bbb F_{q^2}^*$. It is well known that $X^rh(X^{q-1})$ permutes $\Bbb F_{q^2}$ if and only if $\text{gcd}(r,q-1)=1$ and $X^rh(X)^{q-1}$ permutes $\mu_{q+1}$. There are many ad hoc constructions of permutation polynomials of $\Bbb F_{q^2}$ of this type such that $h(X)^{q-1}$ induces monomial functions on the cosets of a subgroup of $\mu_{q+1}$. We give a general construction that can generate, through an algorithm, {\em all} permutation polynomials of $\Bbb F_{q^2}$ with this property, including many which are not known previously. The construction is illustrated explicitly for permutation binomials and trinomials.
\end{abstract}

\maketitle

\section{Introduction}

Let $\f_q$ denote the finite field with $q$ elements. A polynomial $f(X)\in\f_q[X]$ is called a {\em permutation polynomial} (PP) of $\f_q$ if it induces a permutation of $\f_q$. Let $r$ be a positive integer, $d\mid q-1$, and $h(X)\in\f_q[X]$. It is well known \cite{Park-Lee-BAMS-2001, Wang-LNCS-2007, Zieve-PAMS-2009} that $X^rh(X^{(q-1)/d})$ is a PP of $\f_q$ if and only if $\text{gcd}(r,(q-1)/d)=1$ and $X^rh(X)^{(q-1)/d}$ permutes the multiplicative group $\mu_d:=\{x\in\f_q^*:x^d=1\}$. (In general, we use $\mu_m$ to denote a multiplicative group of order $m$ of a finite field.) Replacing $q$ with $q^2$ and $d$ with $q+1$, we see that for $h(X)\in\f_{q^2}[X]$, $X^rh(X^{q-1})$ is a PP of $\f_{q^2}$ if and only $\text{gcd}(r,q-1)=1$ and $X^rh(X)^{q-1}$ permutes $\mu_{q+1}$. To facilitate the constructions of $\mu_{q+1}$ of the form $X^rh(X)^{q-1}$, the following idea has been used by several authors \cite{Cao-Hou-Mi-Xu-FFA-2020, Lavorante-arXiv:2105.12012, Li-Qu-Chen-Li-CC-2018, Qin-Yan-AAECC-2021, Zheng-Yuan-Yu-FFA-2019}: Let $H$ be a subgroup of $\mu_{q+1}$ of small index. Construct a polynomial $h(X)\in\f_{q^2}[X]$ such that $h(X)^{q-1}$ induces monomial functions on each coset of $H$ in $\mu_{q+1}$. With such a property, $X^rh(X)^{q-1}$ permutes $\mu_{q+1}$ if and only if some simple number theoretic conditions on the parameters are satisfied. This method has produced many results. However, these results only deal with specific situations, leaving a unified treatment to be desired.

In the present paper, we take a general approach to the question. The main result is an algorithm (Algorithm~\ref{Algo2.4}) that produces {\em all} PPs of $\f_{q^2}$ of the form $X^rh(X^{q-1})$ such that $h(X)^{q-1}$ induces monomial functions on the cosets of a subgroup in $\mu_{q+1}$. Let $d\mid q+1$ and $\epsilon\in\f_{q^2}^*$ be such that $o(\epsilon)=d$. Define
\begin{equation}\label{1.1}
A_k=\{x\in\mu_{q+1}:x^{(q+1)/d}=\epsilon^k\},\quad 0\le k<d.
\end{equation}
Then $A_0=\mu_{(q+1)/d}$, and $A_0,\dots,A_{d-1}$ are the cosets of $\mu_{(q+1)/d}$ in $\mu_{q+1}$, whence
\begin{equation}\label{1.2}
\mu_{q+1}=\bigsqcup_{k=0}^{d-1}A_k.
\end{equation}
Since $X^{n_1(q-1)}\equiv X^{n_2(q-1)}\pmod{X^{q^2-1}-1}$ whenever $n_1\equiv n_2\pmod{q+1}$, it suffices to consider $h\in\f_{q^2}[X]$ with $\deg h\le q$. Write
\begin{equation}\label{1.3}
h(X)=\sum_{\substack{0\le i<(q+1)/d\cr 0\le j<d}}a_{ij}X^{i+j(q+1)/d}.
\end{equation}
The objective is to find conditions on $a_{ij}\in\f_{q^2}$ such that for every $0\le k<d$, 
\begin{equation}\label{1.4}
x^rh(x)^{q-1}=\lambda_kx^{e_k}\ \text{for all}\ x\in A_k,
\end{equation}
where $e_k\in\Bbb Z$ and $\lambda_k\in\mu_{q+1}$, say $\lambda_k\in A_{\pi(k)}$.

\begin{thm}\label{T1.1}
Assume that \eqref{1.4} is satisfied for all $0\le k<d$. Then $X^rh(X)^{q-1}$ permutes $\mu_{q+1}$ if and only if 
\[
\text{\rm gcd}\Bigl(e_k,\frac{q+1}d\Bigr)=1,\quad 0\le k<d,
\] 
and
\[
k\mapsto \pi(k)+e_kk
\]
is a permutation of $\Bbb Z/d\Bbb Z$.
\end{thm}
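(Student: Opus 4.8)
The plan is to track how the map $F(x):=x^rh(x)^{q-1}$ moves the cosets $A_0,\dots,A_{d-1}$ among themselves and, separately, to decide whether $F$ is injective on each individual coset; the permutation property of $F$ on $\mu_{q+1}$ then splits cleanly into a ``between cosets'' condition and a ``within each coset'' condition, which I expect to match the two displayed criteria exactly.

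First I would determine the target coset of $A_k$. Recall from \eqref{1.1} that $A_\ell$ is characterized by $y^{(q+1)/d}=\epsilon^\ell$. For $x\in A_k$ we have $x^{(q+1)/d}=\epsilon^k$, and since $\lambda_k\in A_{\pi(k)}$ we have $\lambda_k^{(q+1)/d}=\epsilon^{\pi(k)}$. Hence, using \eqref{1.4},
\[
F(x)^{(q+1)/d}=\bigl(\lambda_k x^{e_k}\bigr)^{(q+1)/d}=\lambda_k^{(q+1)/d}\bigl(x^{(q+1)/d}\bigr)^{e_k}=\epsilon^{\pi(k)}\epsilon^{e_kk}=\epsilon^{\pi(k)+e_kk},
\]
so $F(A_k)\subseteq A_{\sigma(k)}$, where $\sigma(k):=\pi(k)+e_kk\bmod d$. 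This is the conceptual heart of the argument: the action induced by $F$ on the set of $d$ cosets is precisely $k\mapsto \pi(k)+e_kk$.

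Next I would analyze injectivity of $F$ on a single coset. Writing $A_k=a_k\,\mu_{(q+1)/d}$ for a fixed $a_k\in A_k$ and putting $x=a_ky$ with $y\in\mu_{(q+1)/d}$, \eqref{1.4} gives $F(x)=\lambda_k a_k^{e_k}y^{e_k}$; as $\lambda_k a_k^{e_k}$ is constant, $F|_{A_k}$ is injective if and only if $y\mapsto y^{e_k}$ is injective on the cyclic group $\mu_{(q+1)/d}$ of order $(q+1)/d$, which holds precisely when $\gcd(e_k,(q+1)/d)=1$.

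Finally I would assemble the two observations into the equivalence. Since $\mu_{q+1}=\bigsqcup_k A_k$ with all $A_k$ of the same cardinality $(q+1)/d$ and $F(A_k)\subseteq A_{\sigma(k)}$, a block/counting argument applies: if $\sigma$ is a permutation of $\Bbb Z/d\Bbb Z$ and every $F|_{A_k}$ is injective, then each $F|_{A_k}\colon A_k\to A_{\sigma(k)}$ is a bijection and the images $A_{\sigma(k)}$ partition $\mu_{q+1}$, so $F$ permutes $\mu_{q+1}$; conversely, if $\sigma$ is not surjective then some coset lies outside the image of $F$, and if some $\gcd(e_k,(q+1)/d)>1$ then $F|_{A_k}$ is not injective, either failure destroying the permutation property. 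This yields exactly the stated criterion. I expect the only point needing care to be this last block argument — namely verifying that the two conditions are genuinely independent and that both are forced in the ``only if'' direction — rather than any isolated hard computation.
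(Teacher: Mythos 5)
Your proposal is correct and follows essentially the same route as the paper's (very terse) proof: you show $F$ maps $A_k$ into $A_{\pi(k)+e_kk}$ by computing $(q+1)/d$-th powers, note that $F|_{A_k}$ is injective exactly when $\gcd(e_k,(q+1)/d)=1$, and assemble via the coset partition \eqref{1.2}. The paper states these three steps in one line; your write-up simply supplies the justifications it leaves implicit.
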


\begin{proof}
By \eqref{1.4}, $X^rh(X)^{q-1}$ maps $A_k$ to $A_{\pi(k)+e_kk}$. This map is one-to-one on $A_k$ if and only if $\text{gcd}(e_k,(q+1)/d)=1$. Hence the conclusion.
\end{proof}

Therefore, the crucial question is to determine the polynomials $h(X)$ satisfying \eqref{1.4}. In Section 2, we will resolve this question and we will describe an algorithm that produces all PPs of the form $X^rh(X^{q-1})$ of $\f_{q^2}$ satisfying \eqref{1.4}. In Section 3, we determine all permutation binomials of $\f_{q^2}$ resulting from this algorithm and it turns out that these permutation binomials were all known previously. In Section 4, we determine all permutation trinomials of $\f_{q^2}$ resulting from the algorithm. There are four classes such permutation trinomials, excluding those that were previously known. These four classes, in their generality, appear to be new, although many special cases have been discovered by other authors. Additional examples of the algorithm are given in Section 5. Overall, this approach reveals many PPs that were not known previously. 

\medskip

\noindent{\bf Remark.} In the present paper, we investigate polynomials $h(X)\in\f_{q^2}[X]$ such that $h(X)^{q-1}$ induces monomial functions on the cosets of a subgroup of $\mu_{q+1}$ and permutes $\mu_{q+1}$ as a whole. Before this approach became popular in recent years, people had explored a similar method for PPs of $\f_q$. Several authors \cite{Fernando-Hou-FFA-2012, Niederreiter-Winterhof-DM-2005, Wang-LNCS-2007, Wang-FFA-2013, Zha-Hu-FFA-2012} had studied PPs of $\f_q$ which induce monomial functions on the cosets of a subgroup of $\f_q^*$.  

\section{The Construction}

For $a\in\f_{q^2}$, define $\bar a=a^q$; for $f(X)=\sum_{i=0}^na_iX^i\in\f_{q^2}[X]$ with $a_n\ne 0$, define
\[
\bar f(X)=\sum_{i=0}^n\bar a_iX^i
\]
and
\[
\tilde f(X)=X^n\bar f(X^{-1})=\sum_{i=0}^n\bar a_iX^{n-i}.
\]
Obviously, $\bar{\bar f}=f$ and $\tilde{\tilde f}=f$. If $\tilde f=cf$ for some $c\in\f_{q^2}^*$, $f$ is said to be {\em self-dual}; in this case, it is necessary that $c\in\mu_{q+1}$. Self-dual polynomials were first introduced in \cite{Hou-pp} for a different purpose; they will also play an important role in the present paper.

We follow the notation of Section~1. Let $h(X)$ be given in \eqref{1.3} and assume that $h$ has no root in $\mu_{q+1}$. For $x\in A_k$, where $0\le k<d$, we have 
\begin{equation}\label{2.1}
h(X)=\sum_{i,j}a_{ij}\epsilon^{jk}x^i=\sum_iM_{ik}x^i,
\end{equation}
where
\begin{equation}\label{2.1.1}
M_{ik}=\sum_ja_{ij}\epsilon^{jk}.
\end{equation}
Note that the $((q+1)/d)\times d$ matrices $[M_{ik}]$ and $[a_{ij}]$ are related by the $d\times d$ Vandermonde matrix $[\epsilon^{jk}]$:
\[
[M_{ik}]=[a_{ij}]\,[\epsilon^{jk}],\qquad [a_{ij}]=\frac 1d\,[M_{ik}]\,[\epsilon^{-kj}].
\]
By \eqref{2.1}, for $x\in\mu_{q+1}$,
\begin{equation}\label{2.2}
x^rh(x)^{q-1}=x^r\frac{h(x)^q}{h(x)}=x^r\,\frac{\displaystyle\sum_i\overline M_{ik}\,x^{-i}}{\displaystyle\sum_iM_{ik}\,x^i}.
\end{equation}
Write
\begin{equation}\label{2.3}
\sum_iM_{ik}X^i=X^sL(X),
\end{equation}
where $L(X)\in\f_{q^2}[X]$, $L(0)\ne 0$, $\deg L=t$, $s+t<(q+1)/d$. Then \eqref{2.3} becomes
\begin{equation}\label{2.4}
x^rh(x)^{q-1}=x^r\,\frac{x^{-s}\bar L(x^{-1})}{x^sL(x)}=x^{r-2s-t}\frac{\tilde L(x)}{L(x)}.
\end{equation}

The following lemma is crucial.

\begin{lem}\label{L2.1}
Let $L(X)\in\f_{q^2}[X]$ be such that $L(0)\ne 0$, $\deg L=t<(q+1)/d$, and $L(X)$ has no root in $A_k$.
\begin{itemize}
\item[(i)]
Assume that there exist $0\le \tau<(q+1)/d$ and $\lambda\in\mu_{q+1}$ such that
\begin{equation}\label{2.5}
\frac{\tilde L(x)}{L(x)}=\lambda x^\tau\quad \text{for all}\ x\in A_k.
\end{equation}
Then either $\tau=0$ or $(q+1)/d-t\le \tau\le t$.

\item[(ii)]
When $\tau=0$, \eqref{2.5} is satisfied if and only if 
\begin{equation}\label{2.6}
\tilde L(X)=\lambda L(X).
\end{equation}

\item[(iii)]
When $(q+1)/d-t\le \tau\le t$, \eqref{2.5} is satisfied if and only if 
\begin{equation}\label{2.7}
L(X)=P(X)+X^{(q+1)/d-\tau}Q(X),
\end{equation}
where $P,Q\in\f_{q^2}[X]$, $\deg P=t-\tau$, $\tilde P=\lambda P$, $\deg Q=\tau+t-(q+1)/d$, $\tilde Q=\lambda\epsilon^k Q$.
\end{itemize}
\end{lem}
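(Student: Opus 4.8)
The plan is to replace the pointwise identity \eqref{2.5} by a single polynomial congruence and then extract the three assertions from degree, valuation, and coefficient comparisons. Set $m=(q+1)/d$. First I would note that the map $x\mapsto x^{m}$ sends $\mu_{q+1}$ onto $\mu_d$ with fibres of size $m$, so $A_k$ is exactly the set of $m$ distinct roots of $X^m-\epsilon^k$ and $\prod_{x\in A_k}(X-x)=X^m-\epsilon^k$. Since $L$ has no root in $A_k$, we may clear denominators in \eqref{2.5}; the resulting polynomial $\tilde L(X)-\lambda X^\tau L(X)$ vanishes on all of $A_k$, so \eqref{2.5} is equivalent to
\[
\tilde L(X)\equiv \lambda X^\tau L(X)\pmod{X^m-\epsilon^k}.
\]
Two facts will be used repeatedly: $L(0)=a_0\neq0$, so $\lambda X^\tau L$ has valuation $\tau$; and $\tilde L$ has constant term $\overline{a_t}\neq0$ and leading term $\overline{a_0}X^t$, so $\tilde L$ has valuation $0$ and degree exactly $t<m$, i.e.\ it is already reduced modulo $X^m-\epsilon^k$.

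Next I would reduce the right-hand side. As $0\le\tau+t\le 2m-2$, every exponent $i+\tau$ occurring in $\lambda X^\tau L(X)$ wraps around at most once, so a monomial $\lambda a_iX^{i+\tau}$ with $i+\tau\ge m$ contributes $\lambda\epsilon^k a_iX^{i+\tau-m}$. This yields a reduced right-hand side of degree $<m$, split into a part supported on exponents $[0,\tau+t-m]$ (from the wrapped terms) and a part supported on exponents at most $m-1$ and at least $\tau$ (from the unwrapped terms); these two supports are disjoint precisely because $\tau+t-m<\tau$, which is where $t<m$ enters. Part (i) now follows from two extremal comparisons against $\tilde L$. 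If $\tau\neq0$ but $\tau<m-t$ there is no wrapping, so the reduced right-hand side has valuation $\tau\ge1$, contradicting the nonzero constant term $\overline{a_t}$ of $\tilde L$; hence $\tau\ge m-t$. If $\tau>t$, then every unwrapped exponent exceeds $t$ while every wrapped exponent is $\le\tau+t-m<t$, so the reduced right-hand side has no $X^t$ term, contradicting the nonzero leading term $\overline{a_0}X^t$ of $\tilde L$; hence $\tau\le t$. For part (ii), $\tau=0$ makes the congruence an equality of polynomials of degree $t<m$, which is precisely \eqref{2.6}.

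For part (iii) I would match coefficients in the range $m-t\le\tau\le t$. Comparing the coefficients of $X^j$ for $t<j\le m-1$, where $\tilde L$ vanishes, forces $a_i=0$ for $t-\tau<i\le m-\tau-1$; this is exactly the assertion that $L(X)=P(X)+X^{m-\tau}Q(X)$ with $P=\sum_{i=0}^{t-\tau}a_iX^i$ and $Q=\sum_{l=0}^{\tau+t-m}a_{l+m-\tau}X^l$. Matching the unwrapped part on $[\tau,t]$ against $\tilde L$ gives $\overline{a_{t-\tau-i}}=\lambda a_i$ for $0\le i\le t-\tau$, which is $\tilde P=\lambda P$; matching the wrapped part on $[0,\tau+t-m]$ gives $\overline{a_{t-j}}=\lambda\epsilon^k a_{m-\tau+j}$ for $0\le j\le\tau+t-m$, which is $\tilde Q=\lambda\epsilon^k Q$. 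The degree claims $\deg P=t-\tau$ and $\deg Q=\tau+t-m$ drop out since $a_0\neq0$ and $a_t\neq0$. For the converse I would substitute the displayed shape of $L$ into the definition of $\tilde L$ and compute $\tilde L(X)=\lambda X^\tau P(X)+\lambda\epsilon^k Q(X)$; evaluating on $A_k$ and using $x^m=\epsilon^k$ to rewrite $\lambda\epsilon^k Q(x)$ as $\lambda x^mQ(x)$ recovers $\tilde L(x)=\lambda x^\tau L(x)$, i.e.\ \eqref{2.5}.

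I expect the only genuinely delicate point to be the bookkeeping in the reduction: verifying that each exponent wraps at most once (so $\epsilon^k$ enters only to the first power) and that the wrapped and unwrapped supports are disjoint. This disjointness is what makes the coefficient comparison in part (iii) decouple cleanly into the two independent self-duality conditions on $P$ and $Q$; everything else is degree and valuation accounting anchored at the two nonvanishing coefficients $a_0=L(0)$ and $a_t$.
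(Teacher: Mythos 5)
Your proof is correct and follows essentially the same route as the paper's: both convert \eqref{2.5} into the divisibility $X^{(q+1)/d}-\epsilon^k\mid \tilde L-\lambda X^\tau L$ and then compare coefficients, anchored at the nonvanishing of $a_0$ and $a_t$, to obtain the gap of zero coefficients and the two self-duality relations for $P$ and $Q$. The only difference is organizational: you reduce $\lambda X^\tau L$ modulo $X^{(q+1)/d}-\epsilon^k$ and match reduced coefficients directly, whereas the paper names the quotient $g$, first proves $\tilde g=\bar\lambda\epsilon^k g$ by applying the tilde operation to both sides, and then reads off the same coefficient identities from Figure~\ref{F1}.
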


\begin{proof}
(ii) Since $\deg(\tilde L-\lambda L)\le t<(q+1)/d=|A_k|$, 
\[
\tilde L(x)-\lambda L(x)\ \text{for all}\ x\in A_k\ \Leftrightarrow\ \tilde L(X)-\lambda L(X)=0.
\]

\medskip
(iii) ($\Rightarrow$) We have $X^{(q+1)/d}-\epsilon^k \mid \tilde L(X)-\lambda X^\tau L(X)$, say
\begin{equation}\label{2.8}
\tilde L(X)-\lambda X^\tau L(X)=g(X)(\epsilon^k-X^{(q+1)/d}),
\end{equation}
where $g(X)\in\f_{q^2}[X]$ with $\deg g=\tau+t-(q+1)/d$. In \eqref{2.8},
\begin{align*}
\widetilde{\tilde L-\lambda X^\tau L}\,&=X^{\tau+t}\bigl(\bar{\tilde L}(X^{-1})-\bar\lambda X^{-\tau}\bar L(X^{-1})\bigr)\cr
&=X^\tau\tilde{\tilde L}-\bar\lambda\tilde L\cr
&=X^\tau L-\bar\lambda\tilde L\cr
&=-\bar\lambda(\tilde L-\lambda X^\tau L).
\end{align*}
Hence
\[
\widetilde{g(X)(\epsilon^k-X^{(q+1)/d})}=-\bar\lambda g(X)(\epsilon^k-X^{(q+1)/d}),
\]
i.e.,
\[
\tilde g(X)(\epsilon^{-k}X^{(q+1)/d}-1)=-\bar\lambda g(X)(\epsilon^k-X^{(q+1)/d}),
\]
hence $\tilde g=\bar\lambda \epsilon^kg$. Therefore, \eqref{2.8} becomes
\begin{equation}\label{2.9}
\tilde L-\lambda X^\tau L=\lambda\tilde g-X^{(q+1)/d}g.
\end{equation}
Let $L=a_0+\cdots+a_tX^t$ and $g=b_0+\cdots+b_vX^v$, where $v=\deg g=\tau+t-(q+1)/d$. The coefficients of $\tilde L-\lambda X^\tau L$ and $\lambda\tilde g-X^{(q+1)/d}g$ are illustrated in Figure~\ref{F1}. It follows from \eqref{2.9} and Figure~\ref{F1} that $a_i=0$ for $t-\tau<i<t-v$, $a_{t-\tau}\ne 0$, and 
\begin{align*}
L\,&=a_0+\cdots+a_{t-\tau}X^{t-\tau}+X^{(q+1)/d-\tau}(a_{t-v}+\cdots+a_tX^v)\cr
&=P(X)+X^{(q+1)/d-\tau}Q(X),
\end{align*}
where $P(X)=a_0+\cdots+a_{t-\tau}X^{t-\tau}$, which satisfies $\tilde P=\lambda P$, and $Q(X)=a_{t-v}+\cdots+a_tX^v=-\bar\lambda g(X)$, which satisfies $\tilde Q=-\lambda\tilde g=-\lambda\bar\lambda\epsilon^k g=\lambda\epsilon^kQ$.

\begin{figure}
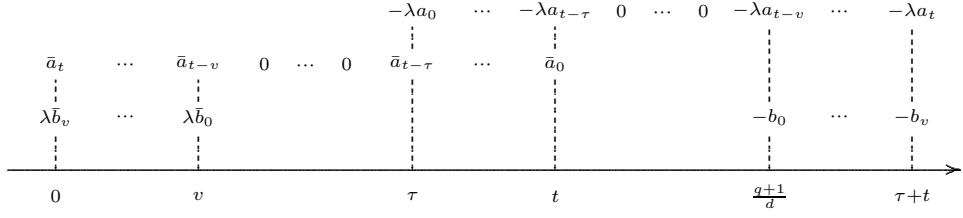

\[
\beginpicture
\setcoordinatesystem units <1.8em, 2em> point at 0 0

\arrow <5pt> [.2,.67] from -10 0 to 10 0 

\setdashes<0.18em>
\plot 9 0   9 0.7 /
\plot 9 1.3   9 2.7 /
\plot 6 0   6 0.7 /
\plot 6 1.3   6 2.7 /
\plot -9 0   -9 0.7 /
\plot -9 1.3   -9 1.7 /
\plot -6 0   -6 0.7 /
\plot -6 1.3   -6 1.7 /
\plot -1.5 0   -1.5 1.7 /
\plot -1.5 2.3   -1.5 2.7 /
\plot 1.5 0   1.5 1.7 /
\plot 1.5 2.3   1.5 2.7 /

\put {$\scriptstyle 0$}  at -9 -0.5
\put {$\scriptstyle \tau+t$}  at 9 -0.5
\put {$\scriptstyle \frac{q+1}d$}  at 6 -0.5
\put {$\scriptstyle -b_0$}  at 6 1
\put {$\scriptstyle -\lambda a_{t-v}$}  at 6 3
\put {$\scriptstyle -b_v$}  at 9 1
\put {$\scriptstyle -\lambda a_t$}  at 9 3
\put {$\scriptstyle \cdots$}  at 7.5 3
\put {$\scriptstyle \cdots$}  at 7.5 1
\put {$\scriptstyle v$}  at -6 -0.5
\put {$\scriptstyle \lambda \bar b_v$}  at -9 1
\put {$\scriptstyle \lambda \bar b_0$}  at -6 1
\put {$\scriptstyle \tau$}  at -1.5 -0.5
\put {$\scriptstyle t$}  at 1.5 -0.5
\put {$\scriptstyle \bar a_{t-\tau}$}  at -1.5 2
\put {$\scriptstyle \bar a_0$}  at 1.5 2
\put {$\scriptstyle -\lambda a_0$}  at -1.5 3
\put {$\scriptstyle -\lambda a_{t-\tau}$}  at 1.5 3
\put {$\scriptstyle \cdots$}  at 0 3
\put {$\scriptstyle \cdots$}  at 0 2
\put {$\scriptstyle 0\kern1em \cdots \kern1em 0$}  at 3.75 3
\put {$\scriptstyle 0\kern1em \cdots \kern1em 0$}  at -3.75 2
\put {$\scriptstyle \bar a_t$}  at -9 2
\put {$\scriptstyle \bar a_{t-v}$}  at -6 2
\put {$\scriptstyle \cdots$}  at -7.5 2
\put {$\scriptstyle \cdots$}  at -7.5 1

\endpicture
\]
\caption{The coefficients of $-\lambda X^\tau L$, $\tilde L$ and $\lambda\tilde g-X^{(q+1)/d}g$ (from top to bottom)}\label{F1}
\end{figure}

\medskip
($\Leftarrow$) We have
\begin{align*}
\tilde L-\lambda X^\tau L\,&=(\widetilde{P+X^{(q+1)/d-\tau}Q})-\lambda X^\tau(P+X^{(q+1)/d-\tau}Q)\cr
&=X^t\bigl(\bar P(X^{-1})+X^{-((q+1)/d-\tau)}\bar Q(X^{-1})\bigr)-\lambda X^\tau(P+X^{(q+1)/d-\tau}Q)\cr
&=X^t\bar P(X^{-1})+X^v\bar Q(X^{-1})-\lambda X^\tau(P+X^{(q+1)/d-\tau}Q)\cr
&=X^\tau\tilde P+\tilde Q-\lambda X^\tau(P+X^{(q+1)/d-\tau}Q)\cr
&=X^\tau\lambda P+\lambda\epsilon^k Q-\lambda X^\tau(P+X^{(q+1)/d-\tau}Q)\cr
&=\lambda Q(\epsilon^k-X^{(q+1)/d}).
\end{align*}
Hence
\[
\frac{\tilde L(x)}{L(x)}=\lambda x^\tau\quad \text{for all}\ x\in A_k.
\]

\medskip
(i) Assume $\tau>0$. By the proof of (iii) ($\Rightarrow$), $\tau+t-(q+1)/d=\deg g\ge 0$, whence $\tau\ge(q+1)/d-t$. It remains to show that $\tau\le t$. Assume to the contrary that $\tau>t$. Then Figure~\ref{F1} is replaced by Figure~\ref{F2}. Then $a_0=0$, which is a contradiction.
\begin{figure}
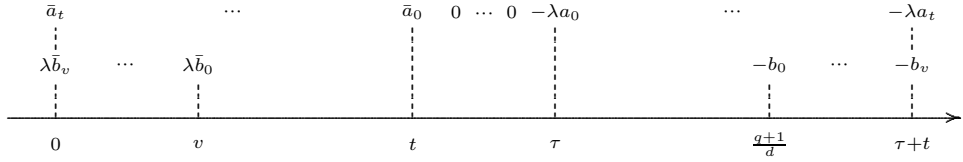

\[
\beginpicture
\setcoordinatesystem units <1.8em, 2em> point at 0 0

\arrow <5pt> [.2,.67] from -10 0 to 10 0 

\setdashes<0.18em>
\plot 9 0   9 0.7 /
\plot 9 1.3   9 1.7 /
\plot 6 0   6 0.7 /
\plot -9 0   -9 0.7 /
\plot -9 1.3   -9 1.7 /
\plot -6 0   -6 0.7 /
\plot -1.5 0   -1.5 1.7 /
\plot 1.5 0   1.5 1.7 /

\put {$\scriptstyle 0$}  at -9 -0.5
\put {$\scriptstyle \tau+t$}  at 9 -0.5
\put {$\scriptstyle \frac{q+1}d$}  at 6 -0.5
\put {$\scriptstyle -b_0$}  at 6 1
\put {$\scriptstyle -b_v$}  at 9 1
\put {$\scriptstyle -\lambda a_t$}  at 9 2
\put {$\scriptstyle \cdots$}  at 7.5 1
\put {$\scriptstyle v$}  at -6 -0.5
\put {$\scriptstyle \lambda \bar b_v$}  at -9 1
\put {$\scriptstyle \lambda \bar b_0$}  at -6 1
\put {$\scriptstyle t$}  at -1.5 -0.5
\put {$\scriptstyle \tau$}  at 1.5 -0.5
\put {$\scriptstyle \bar a_0$}  at -1.5 2
\put {$\scriptstyle -\lambda a_0$}  at 1.5 2
\put {$\scriptstyle 0\kern0.5em \cdots \kern0.5em 0$}  at 0 2
\put {$\scriptstyle \cdots$}  at -5.25 2
\put {$\scriptstyle \cdots$}  at 5.25 2
\put {$\scriptstyle \bar a_t$}  at -9 2
\put {$\scriptstyle \cdots$}  at -7.5 1

\endpicture
\]
\caption{The coefficients of $\tilde L-\lambda X^\tau L$ (top) and $\lambda\tilde g-X^{(q+1)/d}g$ (bottom)}\label{F2}
\end{figure}
\end{proof}

\begin{defn}\label{D2.2}
Let $0\le k<d$, $0\le t<(q+1)/d$ and $\lambda\in\mu_{q+1}$. Define
\begin{equation}\label{2.10}
\mathcal L_k(t,0;\lambda)=\{L\in\f_{q^2}[X]:\deg L=t,\ \tilde L=\lambda L,\ \text{gcd}(L,X^{(q+1)/d}-\epsilon^k)=1\},
\end{equation}
and for $(q+1)/d-t\le \tau\le t$, define
\begin{align}\label{2.11}
\mathcal L_k(t,\tau;\lambda)=\{& L=P+X^{(q+1)/d-\tau}Q: P,Q\in\f_{q^2}[X],\\
&\deg P=t-\tau,\ \tilde P=\lambda P,\ \deg Q=\tau+t-(q+1)/d,\cr
&\tilde Q=\lambda\epsilon^k Q,\ \text{gcd}(L,X^{(q+1)/d}-\epsilon^k)=1\}.\nonumber
\end{align}
\end{defn}

It follows from \eqref{2.3}, \eqref{2.4} and Lemma~\ref{L2.1} that $X^rh(X)^{q-1}$ is a monomial function on $A_k$ if and only if there exist $s,t\ge 0$ with $s+t<(q+1)/d$, $\lambda\in\mu_{q+1}$, and integer $\tau\in\{0\}\cup[(q+1)/d-t,t]$ such that $\sum_iM_{ik}X^i=X^sL(X)$, where $L\in\mathcal L_k(t,\tau;\lambda)$. When this happens,
\begin{equation}\label{2.13}
x^rh(x)^{q-1}=\lambda x^{r-2s-t+\tau}\quad \text{for all}\ x\in A_k.
\end{equation}
Combining the above statement with Theorem~\ref{T1.1}, we obtain the main theorem of the paper:

\begin{thm}\label{T2.3}
Let $h(X)$ be given by \eqref{1.3} and $[M_{ik}]$ be given by \eqref{2.1.1}. Then $X^rh(X^{q-1})$ is a PP of $\f_{q^2}$ such that $X^rh(X)^{q-1}$ is a monomial function on $A_k$ for every $0\le k<d$ if and only if the following conditions are satisfied.
\begin{itemize}
\item[(i)]
For each $0\le k<d$, there exist $s_k,t_k\ge 0$ with $s_k+t_k<(q+1)/d$, $\pi(k)\in\Bbb Z/d\Bbb Z$, $\lambda_k\in A_{\pi(k)}$ and $\tau_k\in\{0\}\cup[(q+1)/d-t_k,t_k]$ such that $\sum_iM_{ik}X^i=X^{s_k}L_k(X)$, where $L_k\in\mathcal L_k(t_k,\tau_k;\lambda_k)$.

\item[(ii)]
$\text{\rm gcd}(r,q-1)=1$ and $\text{\rm gcd}(e_k,(q+1)/d)=1$ for all $0\le k<d$, where
\[
e_k=r-2s_k-t_k+\tau_k.
\]

\item[(iii)]
The map $k\mapsto \pi(k)+e_kk$ permutes $\Bbb Z/d\Bbb Z$.
\end{itemize}
\end{thm}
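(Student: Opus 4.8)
The plan is to assemble the statement from three ingredients already established: the well-known reduction quoted in the introduction, Theorem~\ref{T1.1}, and Lemma~\ref{L2.1} together with the discussion following Definition~\ref{D2.2}. No genuinely new computation is required; the content lies entirely in chaining these equivalences correctly and in tracking how the parameters $s_k,t_k,\tau_k,\lambda_k,\pi(k)$ feed into the exponents $e_k$.

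First I would invoke the criterion recalled in the introduction: $X^rh(X^{q-1})$ is a PP of $\f_{q^2}$ if and only if $\gcd(r,q-1)=1$ and $X^rh(X)^{q-1}$ permutes $\mu_{q+1}$. This reduces the whole problem to the behavior of $X^rh(X)^{q-1}$ on $\mu_{q+1}=\bigsqcup_{k}A_k$. Next I would translate the ``monomial function on each $A_k$'' requirement. By \eqref{2.1}, for $x\in A_k$ one has $h(x)=\sum_iM_{ik}x^i$; writing $\sum_iM_{ik}X^i=X^{s_k}L_k(X)$ with $L_k(0)\ne0$ as in \eqref{2.3} and substituting into \eqref{2.4} yields $x^rh(x)^{q-1}=x^{\,r-2s_k-t_k}\,\tilde L_k(x)/L_k(x)$. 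By Lemma~\ref{L2.1}, the ratio $\tilde L_k(x)/L_k(x)$ equals a monomial $\lambda_k x^{\tau_k}$ on $A_k$ precisely when $\tau_k\in\{0\}\cup[(q+1)/d-t_k,\,t_k]$ and $L_k\in\mathcal L_k(t_k,\tau_k;\lambda_k)$, which is exactly condition~(i). When (i) holds, \eqref{2.13} records that $x^rh(x)^{q-1}=\lambda_k x^{e_k}$ on $A_k$ with $e_k=r-2s_k-t_k+\tau_k$ and $\lambda_k\in A_{\pi(k)}$; this is precisely hypothesis \eqref{1.4} of Theorem~\ref{T1.1}. I would also note that the coprimality clause $\gcd(L_k,X^{(q+1)/d}-\epsilon^k)=1$ built into $\mathcal L_k$ forces $L_k$, and hence $h$, to have no zero on $A_k$, so that the monomial expression is legitimate and $h$ has no root in $\mu_{q+1}$ as assumed at the start of the section.

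Finally, with \eqref{1.4} in force I would apply Theorem~\ref{T1.1}: $X^rh(X)^{q-1}$ permutes $\mu_{q+1}$ if and only if $\gcd(e_k,(q+1)/d)=1$ for all $k$ and $k\mapsto\pi(k)+e_kk$ permutes $\Bbb Z/d\Bbb Z$. Conjoining $\gcd(r,q-1)=1$ from the first step with these conditions gives exactly (ii) and (iii), completing both directions of the equivalence.

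I do not expect a substantive obstacle; the real work was done in Lemma~\ref{L2.1}. The only points demanding care are logical bookkeeping: confirming that Lemma~\ref{L2.1} is a genuine biconditional, so that ``monomial on $A_k$'' \emph{forces} the structure in (i) rather than merely being implied by it --- this rests on the completeness of the dichotomy $\tau=0$ versus $(q+1)/d-t\le\tau\le t$ in Lemma~\ref{L2.1}(i) --- and keeping the indices $e_k$ and $\pi(k)$ consistent between \eqref{2.13} and Theorem~\ref{T1.1} across all $0\le k<d$.
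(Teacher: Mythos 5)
Your proposal is correct and follows essentially the same route as the paper: the paper's own proof is exactly the chain you describe, namely the standard reduction to permuting $\mu_{q+1}$, the identification via \eqref{2.3}, \eqref{2.4} and Lemma~\ref{L2.1} of when $X^rh(X)^{q-1}$ is a monomial function on $A_k$ (yielding condition (i) and the formula \eqref{2.13} for $e_k$), and then Theorem~\ref{T1.1} to produce conditions (ii) and (iii). Your remarks on the biconditional nature of Lemma~\ref{L2.1} and on the role of the coprimality clause in $\mathcal L_k$ are exactly the bookkeeping points the paper leaves implicit.
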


Theorem~\ref{T2.3} can be stated as an algorithm.

\begin{algo}\label{Algo2.4}\rm
Let $r$ be a positive integer such that $\text{gcd}(r,q-1)=1$ and let $d\mid q+1$.
\begin{labeling}{{\bf Output:}}
\item [{\bf Input:}] Sequences $s_k$, $t_k$, $\tau_k$, $\pi(k)$, $\lambda_k$, $0\le k<d$, described below.
\medskip

\item [{\bf Output:}] A PP of $\f_{q^2}$ of the form $X^rh(X^{q-1})$ such that $X^rh(X)^{q-1}$ is a monomial function on each $A_k$, $0\le k<d$.
\medskip

\item [{\bf Note:}] All PPs of $\f_{q^2}$ with such properties can be produced by this algorithm.
\medskip

\item [{\bf Step 1:}] Choose integer sequence $s_k,t_k,\tau_k\ge $, $0\le k<d$, such that $s_k+t_k<(q+1)/d$, $\tau_k\in\{0\}\cup[(q+1)/d-t_k,t_k]$, and $e_k:=r-2s_k-t_k+\tau_k$ satisfies $\text{gcd}(e_k,(q+1)/d)=1$.

\medskip

\item [{\bf Step 2:}] Choose a sequence $\pi(k)\in\Bbb Z/d\Bbb Z$, $0\le k<d$, such that $k\mapsto \pi(k)+e_kk$ permutes $\Bbb Z/d\Bbb Z$.

\medskip

\item [{\bf Step 3:}] For each $0\le k<d$, choose $\lambda_k\in A_{\pi(k)}$ and $L_k\in\mathcal L_k(t_k,\tau_k;\lambda_k)$.
\medskip

\item [{\bf Step 4:}] Compute the $((q+1)/d)\times d$ matrix $[M_{ik}]$ such that 
\[
X^{s_k}L_k=\sum_iM_{ik}X^i,
\]
and compute the $((q+1)/d)\times d$ matrix
\[
[a_{ij}]=\frac 1d[M_{ik}][\epsilon^{-kj}].
\]

\medskip

\item [{\bf Step 5:}] Let 
\[
h(X)=\sum_{i,j}a_{ij}X^{i+j(q+1)/d}.
\]
Then $X^rh(X^{q-1})$ is the output PP of $\f_{q^2}$.
\end{labeling}
\end{algo}

\begin{rmk}\label{R2.5}\rm
In Step 3, when choosing $L_k\in\mathcal L_k(t_k,\tau_k;\lambda_k)$, it is required that $\text{gcd}(L_k,X^{(q+1)/d}-\epsilon^k)=1$. However, this condition is automatically satisfied if $h(X)$ in Step 5 satisfies $\text{gcd}(h,X^{q+1}-1)=1$. In fact, $\text{gcd}(L_k,X^{(q+1)/d}-\epsilon^k)=1$ for all $0\le k<d$ if and only if $\text{gcd}(h,X^{q+1}-1)=1$. 
\end{rmk}

There are two ways to use this algorithm: forward or backward. In the forward approach, we simply proceed from Step 1 through Step 5. The advantage of this approach is that there are few restrictions on the choices of the sequences; the drawback is that we have little control over the appearance of the resulting PP. A few examples of the forward approach are given in Section 5. In the backward approach, we first impose conditions on $[a_{ij}]$. (For example, we may require $h(X)$ to be a binomial of a trinomial.) We then compute $[M_{ik}]$ and determine if the sequences $L_k$, $s_k$, $t_k$, $\tau_k$, $\pi(k)$, $\lambda_k$ exist. The benefit of this approach is that we have more control over the appearance of the resulting PP. However, the conditions for the aforementioned sequence to be existent could be complicated. In Sections 3 and 4, we use the backward approach to determine the permutation binomials and trinomials obtainable from the algorithm. 

For $0\le t<(q+1)/d$, $\tau\in\{0\}\cup[(q+1)/d-t,t]$, $\lambda\in\mu_{q+1}$ and $0\le k<d$, write $\lambda=a^{1-q}$, where $a\in\f_{q^2}^*$, and $\epsilon^k=b^{(q+1)/d}$, where $b\in\mu_{q+1}$. Then it is easy to see that the map
\[
\begin{array}{ccc}
\mathcal L_k(t,\tau;\lambda)&\longrightarrow&\mathcal L_0(t,\tau;1)\vspace{0.2em}\cr
L(X)&\longmapsto& aL(bX)
\end{array}
\]
is a bijection. Set $l(t,\tau)=|\mathcal L_0(t,\tau;1)|$. Then $|\mathcal L_k(t,\tau;\lambda)|=l(t,\tau)$, which is independent of $k$ and $\lambda$.

Let 
\begin{align*}
\Omega=\{(s,t,\tau)\in\Bbb N^3:\; & s+t<(q+1)/d,\ \tau\in\{0\}\cup[(q+1)/d-t,t],\cr
&\text{gcd}(r-2s-t+\tau,(q+1)/d)=1\}.
\end{align*}
In Step 2 of Algorithms~\ref{Algo2.4}, the number of choices for the sequence $\pi(k)$ is $d!$. In Step~3, the number of choices for $\lambda_k$ is $(q+1)/d$ and the number of choices for $L_k$ is $l(t_k,\tau_k)$. Therefore, the total number of PPs produced by the algorithm is
\begin{align}\label{2.15}
&\sum_{(s_0,t_0,\tau_0),\dots,(s_{d-1},t_{d-1},\tau_{d-1})\in\Omega}d!\prod_{k=0}^{d-1}\Bigl(\frac{q+1}dl(t_k,\tau_k)\Bigr)\\
&=d!\Bigl(\frac{q+1}d\Bigr)^d\Bigl(\sum_{(s,t,\tau)\in\Omega}l(t,\tau)\Bigr)^d\cr
&=d!\Bigl(\frac{q+1}d\Bigr)^d\Bigl(\sum_{\substack{0\le t<(q+1)/d\cr \tau\in\{0\}\cup[(q+1)/d-t,t]}}m(t,\tau)l(t,\tau)\Bigr)^d,\nonumber
\end{align}
where
\begin{equation}\label{2.16}
m(t,\tau)=|\{(0\le s<(q+1)/d-t:\text{gcd}(r-2s-t+\tau,(q+1)/d)=1\}|.
\end{equation}
When $\tau=0$, $l(t,0)$ is determined by the following lemma.

\begin{lem}\label{L2.5]}
For $0\le t<(q+1)/d$,
\[
l(t,0)=(q^2-1)\sum_{i=0}^{t-1}(-1)^i\binom{(q+1)/d}i q^{t-i-1}+(-1)^t(q-1)\binom{(q+1)/d}t.
\]
\end{lem}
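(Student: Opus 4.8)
The plan is to recognize $l(t,0)=|\mathcal L_0(t,0;1)|$ as the number of polynomials $L\in\f_{q^2}[X]$ of degree exactly $t$ that are self-dual in the strict sense $\tilde L=L$ and satisfy $\gcd(L,X^{(q+1)/d}-1)=1$. Writing $m=(q+1)/d$ and noting that $X^m-1$ is separable (since $p\nmid m$) with root set exactly $\mu_m$, the coprimality condition says precisely that $L$ has no root in $\mu_m$. I would therefore count by inclusion--exclusion over the $m$ points of $\mu_m$: if $b_S$ denotes the number of self-dual $L$ of degree exactly $t$ vanishing at every $\zeta\in S$, then $l(t,0)=\sum_{S\subseteq\mu_m}(-1)^{|S|}b_S$.

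The key preliminary is to count, for a fixed $c\in\mu_{q+1}$ and $u\ge 0$, the polynomials $R$ of degree exactly $u$ with $\tilde R=cR$; call this number $N(u)$. First I would show it is independent of $c$: writing $c=\beta^{1-q}$ for some $\beta\in\f_{q^2}^*$ (possible since $x\mapsto x^{1-q}$ maps $\f_{q^2}^*$ onto $\mu_{q+1}$) and using $\widetilde{\beta R}=\bar\beta\,\tilde R$, the degree-preserving scaling $R\mapsto\beta R$ bijects $\{R:\tilde R=cR\}$ with $\{R:\tilde R=R\}$. For $c=1$ the condition $\tilde R=R$ reads $\bar r_i=r_{u-i}$, so the coefficients pair off: $r_0\in\f_{q^2}^*$ is free (forcing the leading coefficient $r_u=\bar r_0\ne 0$), the coefficients below the middle are free in $\f_{q^2}$, and a central coefficient (when $u$ is even) lies in $\f_q$. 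Counting the free parameters gives $N(0)=q-1$ and $N(u)=(q^2-1)q^{u-1}$ for $u\ge 1$.

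Finally I would evaluate $b_S$. For $S\subseteq\mu_m$ put $g_S=\prod_{\zeta\in S}(X-\zeta)$, a divisor of $X^m-1$; since $\bar\zeta=\zeta^{-1}$ for $\zeta\in\mu_{q+1}$, a short computation gives $\tilde g_S=c_Sg_S$ with $c_S=(-1)^{|S|}/\prod_{\zeta\in S}\zeta\in\mu_{q+1}$. Using the multiplicativity $\widetilde{fg}=\tilde f\,\tilde g$, every $L$ counted by $b_S$ factors uniquely as $L=g_SR$ with $\deg R=t-|S|$, and $\tilde L=L$ holds if and only if $\tilde R=c_S^{-1}R$; hence $b_S=N(t-|S|)$, which depends only on $|S|$. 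Summing over subsets by size, and using $t<m$ so that the size never exceeds $t$,
\[
l(t,0)=\sum_{j=0}^{t}(-1)^j\binom mj N(t-j).
\]
Separating the top term $j=t$ (where $N(0)=q-1$) from the terms $j<t$ (where $N(t-j)=(q^2-1)q^{t-j-1}$) yields the stated formula with $m=(q+1)/d$. The main obstacle is the uniform identity $b_S=N(t-|S|)$: it rests on the self-duality of every divisor $g_S$ of $X^m-1$, on the multiplicativity of the $\tilde{\ }$ operation, and on the $c$-independence of $N(u)$, all of which must be verified before the inclusion--exclusion collapses to a clean binomial sum.
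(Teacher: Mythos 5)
Your proposal is correct, and its backbone is the same as the paper's: an inclusion--exclusion over the roots in $\mu_{(q+1)/d}$ combined with a count of self-dual polynomials of a prescribed degree. The differences are in how the ingredients are supplied. The paper normalizes to \emph{monic} self-dual polynomials, quotes the formula $\Lambda_0=1$, $\Lambda_u=(q+1)q^{u-1}$ from the cited preprint, asserts $|\mathcal L_Y|=\Lambda_{t-|Y|}$ without comment, and then converts back to $l(t,0)$ via the identity $(q^2-1)|\mathcal L|=(q+1)l(t,0)$. You instead fix the dual constant $c$ throughout, prove the count $N(0)=q-1$, $N(u)=(q^2-1)q^{u-1}$ directly by the coefficient-pairing $r_{u-j}=\bar r_j$ together with the scaling bijection $R\mapsto\beta R$ showing independence of $c$, and you justify the key step $b_S=N(t-|S|)$ explicitly through the multiplicativity $\widetilde{fg}=\tilde f\,\tilde g$ and the self-duality $\tilde g_S=c_Sg_S$ of divisors of $X^{(q+1)/d}-1$. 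This buys a self-contained proof that does not lean on the external reference and that makes visible exactly why the inclusion--exclusion terms depend only on $|S|$; the paper's route is shorter on the page but leaves both the $\Lambda_u$ formula and the identification $|\mathcal L_Y|=\Lambda_{t-|Y|}$ to the reader. One small wording issue: your remark that ``$t<m$ ensures the size never exceeds $t$'' is not the right justification for truncating the subset sum at $j=t$ --- subsets of $\mu_m$ of size greater than $t$ do occur, but they contribute $b_S=0$ because a nonzero polynomial of degree $t$ cannot vanish at more than $t$ points; with that correction the argument is complete.
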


\begin{proof}
Let $\Lambda_t$ denote the number of monic self-dual polynomials of degree $t$ in $\f_{q^2}[X]$. It is known that \cite{Hou-pp}
\[
\Lambda_t=\begin{cases}
1&\text{if}\ t=0,\cr
(q+1)q^{t-1}&\text{if}\ t>0.
\end{cases}
\]
For $Y\subset\mu_{q+1}$, let
\[
\mathcal L_Y=\Bigl\{L\in\f_{q^2}[X]\ \text{monic, self-dual},\ \deg L=t,\ \prod_{y\in Y}(X-y)\mid L\Bigr\}
\]
and
\[
\mathcal L=\{L\in\f_{q^2}[X]\ \text{monic, self-dual},\ \deg L=t,\ \text{gcd}(L,X^{(q+1)/d}-1)=1\}.
\]
Then $|\mathcal L_Y|=\Lambda_{t-|Y|}$ (which is $0$ if $t-|Y|<0$). By inclusion-exclusion,
\begin{align*}
|\mathcal L|\,&=\sum_{i=0}^t(-1)^i\binom{(q+1)/d}i\Lambda_{t-i}\cr
&=\sum_{i=0}^{t-1}(-1)^i\binom{(q+1)/d}i(q+1)q^{t-i-1}+(-1)^t\binom{(q+1)/d}t.
\end{align*}
On the other hand, we have
\[
(q^2-1)|\mathcal L|=(q+1)l(t,0),
\]
since both sides count the number of self-dual polynomials of degree $t$ in $\f_{q^2}[X]$ that are relatively prime to $X^{(q+1)/d}-1$. Hence $l(t,0)=(q-1)|\mathcal L|$ and the conclusion follows.
\end{proof}

However, for $\tau>0$, we have not found an explicit formula for $l(t,\tau)$.

\begin{ques}\label{Q2.6}
For $(q+1)/d-t\le \tau\le t<(q+1)/d$, determine
\begin{align*}
l(t,\tau)=|\{&L=P+X^{(q+1)/d-\tau}Q: P,Q\in\f_{q^2}[X],\ \deg P=t-\tau,\cr
&\deg Q=\tau+t-(q+1)/d,\ \tilde P=P,\ \tilde Q=Q,\ \text{\rm gcd}(L,X^{(q+1)/d}-1)=1\}|.
\end{align*}
\end{ques}

\section{Permutation Binomials}

We follow the notation of Algorithm~\ref{Algo2.4}. Assume that the polynomial $h(X)$ resulting from Algorithm~\ref{Algo2.4} is a binomial, i.e., the matrix $[a_{ij}]$ has precisely two nonzero entries. Without loss of generality, assume that
\[
[a_{ij}]=
\bbordermatrix{ & \scriptstyle 0 && \scriptstyle v  \cr
              \scriptstyle 0 &1  \cr
              \cr
              \scriptstyle u & & & a & &\cr
              \cr},
\]
where $a\in\f_{q^2}^*$, $0\le u<(q+1)/d$, $0\le v<d$, $(u,v)\ne(0,0)$. We remind the reader that the rows of the matrix $[a_{ij}]$ are labeled by integers $0,\dots,(q+1)/d-1$ and the columns are labeled by $0,\dots,d-1$.

\medskip

{\bf Case 1.} Assume that $u=0$. Then 
\[
X^rh(X^{q-1})=X^r(1+aX^{v(q^2-1)/d}).
\]
It is well known, as stated in the introduction, that $X^r(1+aX^{r(q^2-1)/d})$ is a PP of $\f_{q^2}$ if and only if $\text{gcd}(r,(q^2-1)/d)=1$ and $X^r(1+aX^v)^{(q^2-1)/d}$ permutes $\mu_d$.

\medskip

{\bf Case 2.} Assume that $u>0$. Then
\vspace{-0.8em} 
\[
[M_{ik}]=
\bbordermatrix{{}\cr
\scriptstyle 0 & 1 & \cdots &1 \cr
\cr
\scriptstyle u & a\epsilon^{v\cdot 0} & \cdots & a\epsilon^{v(d-1)} \cr
\cr}
\]
and
\[
\sum_iM_{ik}X^i=1+a\epsilon^{vk}X^u,\quad s_k=0,\ t_k=u,\quad 0\le k<d.
\]
In particular, $L_0=1+aX^u\in\mathcal L_0(t_0,\tau_0;\lambda_0)$, where $\tau_0\in\{0\}\cup[(q+1)/d-t_0,t_0]$. 

First assume that $\tau_0=0$. By the definition of $\mathcal L_0(t_0,0;\lambda_0)$, $L_0$ is self-dual. It follows that $a\in\mu_{q+1}$. 
We have $X^rh(X^{q-1})=X^r(1+aX^{l(q-1)})$, where $l=u+v(q+1)/d$. Because of the condition $a\in\mu_{q+1}$, such permutation binomials are well known. By \cite[Corollary~5.3]{Zieve-arXiv1310.0776}, $X^r(1+aX^{l(q-1)})$ permutes $\f_{q^2}$ if and only if $\text{gcd}(r,q-1)=1$, $\text{gcd}(r-l,q+1)=1$ and $(-a)^{(q+1)/\text{gcd}(q+1,l)}\ne 1$.

Next, assume that $\tau_0\in[(q+1)/d-t_0,t_0]$. Since $L_0\in\mathcal L_0(t_0,\tau_0;\lambda_0)$, we have $L_0=P+X^{(q+1)-\tau_0}Q$, where $P,Q\in\f_{q^2}[X]$, $\deg P=t_0-\tau_0$, $\deg Q=\tau_0+t_0-(q+1)/d$, $\text{gcd}(L_0,X^{(q+1)/d}-1)=1$.  Since $P+X^{(q+1)-\tau_0}Q$ is a binomial, we must have $t_0=\tau_0$ and $(q+1)/d-\tau_0=t_0$ (see Figure~\ref{F3}). Hence $t_0=\tau_0=u=(q+1)/2d$. Then $h(X)=1+aX^{u+v(q+1)/d}=1+aX^{(1+2v)(q+1)/2d}$. Then $X^rh(X^{q-1})=X^r(1+aX^{(1+2v)(q^2-1)/2d})$ is a PP of $\f_{q^2}$ if and only if $\text{gcd}(r,(q^2-1)/2d)=1$ and $X^r(1+aX^{1+2v})$ permutes $\mu_{2d}$.

\begin{figure}
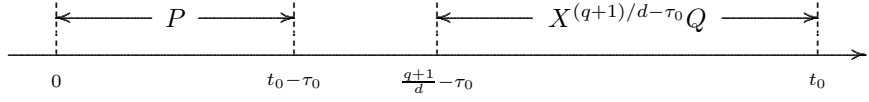

\[
\beginpicture
\setcoordinatesystem units <1.8em, 2em> point at 0 0

\arrow <5pt> [.2,.67] from -9 0 to 9 0 
\arrow <5pt> [.2,.67] from -6 0.7 to -8 0.7 
\arrow <5pt> [.2,.67] from -5 0.7 to -3 0.7
\arrow <5pt> [.2,.67] from 2 0.7 to 0 0.7
\arrow <5pt> [.2,.67] from 6 0.7 to 8 0.7

\setdashes<0.18em>
\plot -8 0   -8 1 /
\plot -3 0   -3 1 /
\plot  0 0   0 1 /
\plot 8 0   8 1 /

\put {$\scriptstyle 0$}  at -8 -0.5
\put {$\scriptstyle t_0-\tau_0$}  at -3 -0.5
\put {$\scriptstyle \frac{q+1}d-\tau_0$}  at 0 -0.5
\put {$\scriptstyle t_0$}  at 8 -0.5
\put {$P$} at -5.5 0.7
\put {$X^{(q+1)/d-\tau_0}Q$}  at 4 0.7

\endpicture
\]
\caption{When $P+X^{(q+1)/d-\tau_0}Q$ is a binomial}\label{F3}
\end{figure}

\medskip
{\bf Summary for binomials.} From the above two cases, we see that permutation binomials generated by Algorithm~\ref{Algo2.4} were all previously known.

\section{Permutation Trinomials}

Now assume that $h(X)$ in Algorithm~\ref{Algo2.4} is a trinomial, i.e., the matrix $[a_{ij}]$ has precisely three nonzero entries. Without loss of generality, write
\[
[a_{ij}]=
\bbordermatrix{&\scriptstyle 0 &&\scriptstyle  j_1 &&&\scriptstyle  j_2&\cr 
\scriptstyle 0 & 1\cr
\cr
\scriptstyle i_1 &&& a\cr
\cr
\scriptstyle i_2 &&&&&& b\cr
\cr
},\qquad a,b\in\f_{q^2}^*.
\]

\subsection{Three cases}\

\medskip

{\bf Case 1.} Assume that $i_1=i_2=0$. Then
\[
X^rh(X^{q-1})=X^r(1+aX^{j_1(q^2-1)/d}+bX^{j_2(q^2-1)/d}).
\]
Such a trinomial is a PP of $\f_{q^2}$ if and only if $\text{gcd}(r,(q^2-1)/d)=1$ and $X^r(1+aX^{j_1}+bX^{j_2})^{(q^2-1)/d}$ permutes $\mu_d$.

\medskip

{\bf Case 2.} Assume that $i_1=0$ and $0<i_2<(q+1)/d$. Then
\[
[M_{ik}]=\bbordermatrix{{}\cr
\scriptstyle 0 &1+a\epsilon^{j_1\cdot 0}& \cdots & 1+a\epsilon^{j_1(d-1)}\cr
\cr
\scriptstyle i_2 &b\epsilon^{j_2\cdot 0}&\cdots& b\epsilon^{j_2(d-1)}\cr
\cr}
\]
and
\begin{equation}\label{2.16.1}
\sum_i M_{ik}X^i=1+a\epsilon^{j_1k}+b\epsilon^{j_2k}X^{i_2}.
\end{equation}

\medskip
{\bf Case 2.1.} Assume that $1+a\epsilon^{j_1k}\ne 0$ for all $0\le k<d$. Then
\[
L_k(X)=1+a\epsilon^{j_1k}+b\epsilon^{j_2k}X^{i_2},\quad  s_k=0,\quad t_k=i_2.
\]

\begin{lem}\label{L4.1}
The following statements hold in Case 2.1.
\begin{itemize}
\item[(i)] If $\tau_k=0$ for some $0\le k<d$, then
\begin{equation}\label{2.17}
\Bigl(\frac{1+a^q\epsilon^{-j_1k}}b\Bigr)^{(q+1)/d}=\epsilon^{\alpha(k)}
\end{equation}
for some $\alpha(k)\in\Bbb Z/d\Bbb Z$. We have 
\[
e_k=r-i_2\quad\text{and}\quad \pi(k)=-j_2k\frac{q+1}d+\alpha(k).
\]

\item[(ii)]
If $\tau_k=0$ for all $0\le k<d$, then $j_1=d/2$, $a^{q-1}=-1$, $(1-a)/b\in\mu_{q+1}$, $e_k=r-i_2$, and
\[
\pi(k)+e_kk=\Bigl(-j_2\frac{q+1}d+r-i_2\Bigr)k+\delta(k)v+u,
\]
where 
\[
\Bigl(\frac{1-a}b\Bigr)^{(q+1)/d}=\epsilon^u,\quad \Bigl(\frac{1+a}{1-a}\Bigr)^{(q+1)/d}=\epsilon^v,
\]
and 
\[
\delta(k)=\begin{cases}
0&\text{if $k$ is even},\cr
1&\text{if $k$ is odd}.
\end{cases}
\]
\end{itemize}
\end{lem}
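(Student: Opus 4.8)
The plan is to start from the explicit dual of $L_k$. Since $L_k(X)=(1+a\epsilon^{j_1k})+b\epsilon^{j_2k}X^{i_2}$ has degree $t_k=i_2$, and since $\epsilon\in\mu_{q+1}$ forces $\epsilon^q=\epsilon^{-1}$ while $a^{q^2}=a$, I would compute
\[
\tilde L_k(X)=X^{i_2}\bar L_k(X^{-1})=b^q\epsilon^{-j_2k}+(1+a^q\epsilon^{-j_1k})X^{i_2}.
\]
By Definition~\ref{D2.2}, the hypothesis $\tau_k=0$ means $L_k\in\mathcal L_k(i_2,0;\lambda_k)$, i.e. $L_k$ is self-dual: $\tilde L_k=\lambda_k L_k$. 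Comparing constant terms and coefficients of $X^{i_2}$ yields the two expressions
\[
\lambda_k=\frac{b^q\epsilon^{-j_2k}}{1+a\epsilon^{j_1k}}=\frac{1+a^q\epsilon^{-j_1k}}{b\epsilon^{j_2k}},
\]
and equating them gives the single scalar relation $b^{q+1}=(1+a\epsilon^{j_1k})(1+a^q\epsilon^{-j_1k})$.

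For part (i), the key observation is that $1+a\epsilon^{j_1k}=\overline{1+a^q\epsilon^{-j_1k}}$, so the right-hand side above is the norm $(1+a^q\epsilon^{-j_1k})^{q+1}$; hence $b^{q+1}=(1+a^q\epsilon^{-j_1k})^{q+1}$, which says exactly that $(1+a^q\epsilon^{-j_1k})/b\in\mu_{q+1}$. Raising to the power $(q+1)/d$ lands in $\mu_d=\langle\epsilon\rangle$, so $\bigl((1+a^q\epsilon^{-j_1k})/b\bigr)^{(q+1)/d}=\epsilon^{\alpha(k)}$ for a well-defined $\alpha(k)\in\Bbb Z/d\Bbb Z$, giving \eqref{2.17}. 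Then $e_k=r-2s_k-t_k+\tau_k=r-i_2$ from $s_k=0$, $t_k=i_2$, $\tau_k=0$, and raising $\lambda_k=(1+a^q\epsilon^{-j_1k})/(b\epsilon^{j_2k})$ to the $(q+1)/d$-th power gives $\epsilon^{\pi(k)}=\lambda_k^{(q+1)/d}=\epsilon^{\alpha(k)}\epsilon^{-j_2k(q+1)/d}$, i.e. $\pi(k)=-j_2k\frac{q+1}d+\alpha(k)$.

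For part (ii), I would impose the relation $b^{q+1}=1+a^{q+1}+a\epsilon^{j_1k}+a^q\epsilon^{-j_1k}$ for every $k$. Since the left side is fixed, the quantity $a\epsilon^{j_1k}+a^q\epsilon^{-j_1k}=\mathrm{Tr}_{\f_{q^2}/\f_q}(a\epsilon^{j_1k})$ must be independent of $k$. Writing $w=\epsilon^{j_1}$ of order $d'=d/\gcd(j_1,d)$ and a constant $c$, the condition $a\zeta+a^q\zeta^{-1}=c$ for all $\zeta\in\langle w\rangle=\mu_{d'}$ rewrites as $a\zeta^2-c\zeta+a^q=0$, a quadratic with at most two roots, forcing $d'\le 2$. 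Because the trinomial entry at $(0,j_1)$ is distinct from $(0,0)$ we have $j_1\ne 0$, so $d'\ge 2$; hence $d'=2$, $w=-1$, and $j_1=d/2$ (which forces $d$ even, hence odd characteristic). Substituting $\epsilon^{j_1k}=(-1)^k$ into $\mathrm{Tr}(a(-1)^k)=c$ at $k=0,1$ gives $a+a^q=-(a+a^q)$, so $a^q=-a$, i.e. $a^{q-1}=-1$; then $b^{q+1}=1-a^2=(1-a)(1+a)$ shows $(1-a)/b\in\mu_{q+1}$. Finally $1+a^q\epsilon^{-j_1k}=1-a(-1)^k$ equals $1-a$ or $1+a$ according as $k$ is even or odd, so factoring $\frac{1+a}b=\frac{1+a}{1-a}\cdot\frac{1-a}b$ gives $\alpha(k)=u+\delta(k)v$; combining with part (i) yields
\[
\pi(k)+e_kk=\Bigl(-j_2\frac{q+1}d+r-i_2\Bigr)k+\delta(k)v+u.
\]

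The main obstacle I expect is the rigidity step in part (ii) that pins $d'=2$: the crux is recognizing the $k$-independence as a trace condition and reading $a\zeta+a^q\zeta^{-1}=c$ as a quadratic in $\zeta\in\mu_{d'}$, since the two-root bound is what rules out all orders $d'\ge 3$ uniformly across characteristics. The remaining checks — that $1\pm a\ne 0$ (so $u,v$ are defined) and that $(1+a)/(1-a)\in\mu_{q+1}$ — are short norm computations using $a^q=-a$ and $q$ odd.
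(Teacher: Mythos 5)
Your proposal is correct and follows essentially the same route as the paper: extract $\lambda_k$ from the self-duality condition $\tilde L_k=\lambda_k L_k$, derive the norm relation $(1+a\epsilon^{j_1k})(1+a^q\epsilon^{-j_1k})=b^{q+1}$, use the fact that a quadratic has at most two roots in $\langle\epsilon^{j_1}\rangle$ to force $j_1=d/2$, compare $k=0,1$ to get $a^q=-a$, and split on the parity of $k$ to obtain $\alpha(k)=u+\delta(k)v$. The only cosmetic difference is that in part (i) you establish $(1+a^q\epsilon^{-j_1k})/b\in\mu_{q+1}$ directly from the norm identity rather than from $\lambda_k\in A_{\pi(k)}$, which is equally valid.
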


\begin{proof}
(i) Clearly, $e_k=r-i_2$. Since $L_k\in\mathcal L_k(t_k,0;\lambda_k)$, we have $\tilde L_k=\lambda_kL_k$, whence 
\[
\lambda_k=\frac{(1+a\epsilon^{j_1k})^q}{b\epsilon^{j_2k}}.
\]
Since 
\[
\epsilon^{\pi(k)}=\lambda_k^{(q+1)/d}=\Bigl(\frac{1+a^q\epsilon^{-j_1k}}{b\epsilon^{j_2k}}\Bigr)^{(q+1)/d}=\epsilon^{-j_2k(q+1)/d}\Bigl(\frac{1+ a^q\epsilon^{-j_1k}}b\Bigr)^{(q+1)/d},
\]
we have 
\[
\Bigl(\frac{1+a^q\epsilon^{-j_1k}}b\Bigr)^{(q+1)/d}=\epsilon^{\alpha(k)}
\]
for some $\alpha(k)\in\Bbb Z/d\Bbb Z$ and $\pi(k)=-j_2k(q+1)/d+\alpha(k)$.

\medskip
(ii) By \eqref{2.17},
\begin{equation}\label{2.18}
(1+a^q\epsilon^{-j_1k})^{q+1}=b^{q+1},
\end{equation}
i.e.,
\[
(1+a\epsilon^{j_1k})(1+a^q\epsilon^{-j_1k})=b^{q+1}.
\]
Hence the quadratic equation $(1+ax)(1+a^qx^{-1})=b^{q+1}$ has solutions $x=\epsilon^{j_1k}$, $0\le k<d$. Since the number of such solutions is $\le 2$ and since $0<j_1<d$, we must have $j_1=d/2$, whence $\epsilon^{j_1}=-1$. It follows from \eqref{2.18}, with $k=0,1$, that 
\[
(1+a^q)^{q+1}=(1-a^q)^{q+1}.
\]
This happens if and only if $a^q=-a$. (Note that $q$ is odd since $2\mid d$.) 
Then
\[
\Bigl(\frac{1-a}b\Bigr)^{q+1}=1\quad\text{and}\quad \Bigl(\frac{1+a}{1-a}\Bigr)^{q+1}=1.
\]
Write
\[
\Bigl(\frac{1-a}b\Bigr)^{(q+1)/d}=\epsilon^u\quad\text{and}\quad \Bigl(\frac{1+a}{1-a}\Bigr)^{(q+1)/d}=\epsilon^v.
\]
Then by \eqref{2.17},
\begin{align*}
\epsilon^{\alpha(k)}\,&=\Bigl(\frac{1-a(-1)^k}b\Bigr)^{(q+1)/d}=\Bigl(\frac{1-a}b\Bigr)^{(q+1)/d}\Bigl(\frac{1-a(-1)^k}{1-a}\Bigr)^{(q+1)/d}\cr
&=\begin{cases}
\epsilon^u&\text{if $k$ is even},\cr
\epsilon^{u+v}&\text{if $k$ is odd}.
\end{cases}
\end{align*}
Thus $\alpha(k)=u+\delta(k)v$, and by (i),
\[
\pi(k)+e_kk=-j_2k\frac{q+1}d+u+\delta(k)v+(r-i_2)k =\Bigl(-j_2\frac{q+1}d+r-i_2\Bigr)k+\delta(k)v+u.
\]
\end{proof}

If $\tau_k\in[(q+1)/d-t_k,t_k]$ for some $0\le k<d$, applying the argument in the last paragraph in Case 2 of Section 3 to $L_k\in\mathcal L_k(t_k,\tau_k;\lambda_k)$, we have $t_k=\tau_k=i_2=(q+1)/2d$. Therefore,
\begin{align*}
X^rh(X^{q-1})\,&=X^r(1+aX^{(q-1)\cdot j_1(q+1)/d}+bX^{(q-1)((q+1)/2d+j_2(q+1)/d)})\cr
&=X^r(1+aX^{j_1(q^2-1)/d}+bX^{(2j_2+1)(q^2-1)/2d)}).
\end{align*}
This trinomial permutes $\f_{q^2}$ if and only if $\text{gcd}(r,(q^2-1)/2d)=1$ and $X^r(1+aX^{2j_1}+bX^{2j_2+1})^{(q^2-1)/2d}$ permutes $\mu_{2d}$.

\medskip
{\bf Case 2.2.} Assume that $1+a\epsilon^{j_1k}=0$ for some $0\le k<d$. Write
$\epsilon^k=\lambda^{(q+1)/d}$ for some $\lambda\in\mu_{q+1}$. Then
\begin{align*}
h(\lambda X)\,&=1+a(\lambda X)^{j_1(q+1)/d}+b(\lambda X)^{i_2+j_2(q+1)/d}\cr
&=1-X^{j_1(q+1)/d}+b\lambda^{i_2}\epsilon^{j_2k}X^{i_2+j_2(q+1)/d}.
\end{align*}
Hence we may assume that $a=-1$. By \eqref{2.16.1},
\[
L_k(X)=\begin{cases}
b\epsilon^{j_2k}&\text{if}\ j_1k\equiv 0\pmod d,\cr
1-\epsilon^{j_1k}+b\epsilon^{j_2k}X^{i_2}&\text{if}\ j_1k\not\equiv 0\pmod d,
\end{cases}
\]
\begin{equation}\label{sk}
s_k=\begin{cases}
i_2&\text{if}\ j_1k\equiv 0\pmod d,\cr
0&\text{if}\ j_1k\not\equiv 0\pmod d,
\end{cases}
\end{equation}
\begin{equation}\label{tk}
t_k=\begin{cases}
0&\text{if}\ j_1k\equiv 0\pmod d,\cr
i_2&\text{if}\ j_1k\not\equiv 0\pmod d.
\end{cases}
\end{equation}

\begin{lem}\label{L4.2}
If $j_1k\equiv 0\pmod d$, then $\tau_k=0$, $e_k=r-2i_2$, and 
\[
\pi(k)=-2j_2k\frac{q+1}d+\beta, 
\]
where $b^{(q^2-1)/d}=\epsilon^\beta$.
\end{lem}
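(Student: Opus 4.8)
The plan is to substitute the hypothesis $j_1k\equiv 0\pmod d$ directly into \eqref{2.16.1} and read off the data. Since $o(\epsilon)=d$, the congruence $j_1k\equiv 0\pmod d$ gives $\epsilon^{j_1k}=1$, so with $a=-1$ (as arranged at the start of Case 2.2) the expression collapses to $\sum_iM_{ik}X^i=b\epsilon^{j_2k}X^{i_2}$. Comparing with the decomposition $\sum_iM_{ik}X^i=X^{s_k}L_k(X)$ of \eqref{2.3}, this forces $s_k=i_2$ and $L_k(X)=b\epsilon^{j_2k}$, a nonzero constant, in agreement with \eqref{sk} and \eqref{tk}; in particular $t_k=\deg L_k=0$.

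Next I would pin down $\tau_k$. By the constraint in Theorem~\ref{T2.3}(i) one has $\tau_k\in\{0\}\cup[(q+1)/d-t_k,t_k]$. Since $t_k=0$ and $(q+1)/d\ge 1$, the interval $[(q+1)/d,0]$ is empty, leaving $\tau_k=0$ as the only option. The value of $e_k$ is then immediate from \eqref{2.13}: $e_k=r-2s_k-t_k+\tau_k=r-2i_2$.

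The remaining task is to compute $\pi(k)$, which is determined by the membership $\lambda_k\in A_{\pi(k)}$, i.e.\ $\lambda_k^{(q+1)/d}=\epsilon^{\pi(k)}$. Since $\tau_k=0$, Lemma~\ref{L2.1}(ii) supplies the self-dual relation $\tilde L_k=\lambda_k L_k$. For the constant $L_k=b\epsilon^{j_2k}$, using $\epsilon^q=\epsilon^{-1}$ (valid as $\epsilon\in\mu_{q+1}$) gives $\tilde L_k=\bar L_k=b^q\epsilon^{-j_2k}$, so $\lambda_k=b^{q-1}\epsilon^{-2j_2k}$ (which indeed lies in $\mu_{q+1}$ since $(b^{q-1})^{q+1}=b^{q^2-1}=1$). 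Raising to the power $(q+1)/d$ yields $\epsilon^{\pi(k)}=b^{(q^2-1)/d}\epsilon^{-2j_2k(q+1)/d}=\epsilon^{\beta-2j_2k(q+1)/d}$, which is exactly the asserted formula $\pi(k)=-2j_2k\frac{q+1}d+\beta$.

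This lemma is essentially a direct unwinding of the definitions, so I do not anticipate a genuine obstacle. The one point deserving a moment's care is the forcing of $\tau_k=0$: it rests on the elementary observation that a constant $L_k$ (that is, $t_k=0$) leaves no room for a nonzero shift $\tau_k$, rather than on any deeper structural fact. Everything else is the self-dual bookkeeping already developed in Lemma~\ref{L2.1} and \eqref{2.13}.
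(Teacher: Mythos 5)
Your proof is correct and follows essentially the same route as the paper's: read off $L_k=b\epsilon^{j_2k}$, $s_k=i_2$, $t_k=0$ from \eqref{2.16.1}, use $\tilde L_k=\lambda_kL_k$ to get $\lambda_k=b^{q-1}\epsilon^{-2j_2k}$, and raise to the power $(q+1)/d$. The only difference is that you spell out the justification of $\tau_k=0$ (the interval $[(q+1)/d-t_k,\,t_k]$ is empty when $t_k=0$) where the paper simply says ``clearly,'' and that added detail is accurate.
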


\begin{proof}
Clearly, $\tau_k=0$ and $e_k=r-2i_2$. Since $L_k=b\epsilon^{j_2k}\in\mathcal L_k(t_k,0;\lambda_k)$, we have $\tilde L_k=\lambda_kL_k$, whence $\lambda_k=(b\epsilon^{j_2k})^{q-1}=b^{q-1}\epsilon^{-2j_2k}$. Since
\[
\lambda_k^{(q+1)/d}=b^{(q^2-1)/d}\epsilon^{-2j_2k(q+1)/d}=\epsilon^{-2j_2k(q+1)/d+\beta},
\]
we have $\pi(k)=-2j_2k(q+1)/d+\beta$.
\end{proof}

If $\tau_k\in[(q+1)/d-t_k,t_k]$ for some $0\le k<d$ with $j_1k\not\equiv 0\pmod d$, applying the argument in the last paragraph in Case 2 of Section 3 to $L_k\in\mathcal L_k(t_k,\tau_k;\lambda_k)$, we have $t_k=\tau_k=i_2=(q+1)/2d$. Then
\[
X^rh(X^{q-1})=X^r(1+aX^{j_1(q^2-1)/d}+bX^{(2j_2+1)(q^2-1)/2d)}),
\]
which permutes $\f_{q^2}$ if and only if $\text{gcd}(r,(q^2-1)/2d)=1$ and $X^r(1+aX^{2j_1}+bX^{2j_2+1})^{(q^2-1)/2d}$ permutes $\mu_{2d}$. Therefore, we assume that $\tau_k=0$ for all $0\le k<d$ with $j_1k\not\equiv 0\pmod d$. This assumption combined with Lemma~\ref{L4.2} means that $\tau_k=0$ for all $0\le k<d$.

\begin{lem}\label{L4.3}
Assume that $\tau_k=0$ for all $0\le k<d$ in Case 2.2. Then $o(\epsilon^{j_1})=2$ or $3$.

\begin{itemize}
\item[(i)]
If $o(\epsilon^{j_1})=2$, then
\[
s_k=\begin{cases}
i_2&\text{if}\ k\equiv 0\pmod 2,\cr
0&\text{if}\ k\not\equiv 0\pmod 2,
\end{cases}
\]

\[
t_k=\begin{cases}
0&\text{if}\ k\equiv 0\pmod 2,\cr
i_2&\text{if}\ k\not\equiv 0\pmod 2,
\end{cases}
\]

\[
e_k=\begin{cases}
r-2i_2&\text{if}\ k\equiv 0\pmod 2,\cr
r-i_2&\text{if}\ k\not\equiv 0\pmod 2,
\end{cases}
\]

\[
\pi(k)=\begin{cases}
\displaystyle -2j_2k\frac{q+1}d+2\theta&\text{if}\ k\equiv 0\pmod 2,\vspace{0.4em}\cr
\displaystyle -j_2k\frac{q+1}d+\theta&\text{if}\ k\not\equiv 0\pmod 2,
\end{cases}
\]
where $(2/b)^{(q+1)/d}=\epsilon^\theta$.

\medskip

\item[(ii)] 
If $o(\epsilon^{j_1})=3$, then
\[
s_k=\begin{cases}
i_2&\text{if}\ k\equiv 0\pmod 3,\cr
0&\text{if}\ k\not\equiv 0\pmod 3,
\end{cases}
\]

\[
t_k=\begin{cases}
0&\text{if}\ k\equiv 0\pmod 3,\cr
i_2&\text{if}\ k\not\equiv 0\pmod 3,
\end{cases}
\]

\[
e_k=\begin{cases}
r-2i_2&\text{if}\ k\equiv 0\pmod 3,\cr
r-i_2&\text{if}\ k\not\equiv 0\pmod 3,
\end{cases}
\]

\[
\pi(k)=\begin{cases}
\displaystyle -(2j_2k+j_1)\frac{q+1}d+2\eta+\frac{q+1}{\text{\rm gcd}(2,d)} &\text{if}\ k\equiv 0\pmod 3,\vspace{0.4em}\cr
\displaystyle -(j_2k+j_1)\frac{q+1}d+\eta+\frac{q+1}{\text{\rm gcd}(2,d)}&\text{if}\ k\not\equiv 0\pmod 3,
\end{cases}
\]
where 
\[
\Bigl(\frac{1-\epsilon^{j_1}}b\Bigr)^{(q+1)/d}=\epsilon^\eta.
\]
\end{itemize}
\end{lem}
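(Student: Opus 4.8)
The plan is to exploit the self-duality forced by the hypothesis $\tau_k=0$. Throughout write $\zeta=\epsilon^{j_1}$ and $m=o(\zeta)=o(\epsilon^{j_1})$; since $j_1\not\equiv 0\pmod d$ (otherwise $h$ is not a genuine trinomial) we have $m\ge 2$, and the condition $j_1k\equiv 0\pmod d$ appearing in \eqref{sk} and \eqref{tk} is exactly $m\mid k$. For $m\mid k$, $L_k=b\epsilon^{j_2k}$ is a nonzero constant, while for $m\nmid k$, $L_k=(1-\zeta^k)+b\epsilon^{j_2k}X^{i_2}$ has both coefficients nonzero. The assumption $\tau_k=0$ means each $L_k$ lies in some $\mathcal L_k(t_k,0;\lambda_k)$, i.e. $\tilde L_k=\lambda_kL_k$.

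First I would pin down $m$, which I expect to be the crux. For a two-term polynomial $L=c_0+c_1X^{i_2}$ with $c_0,c_1\ne 0$, one has $\tilde L=\lambda L$ for some $\lambda\in\mu_{q+1}$ if and only if $c_0$ and $c_1$ have equal norm, $c_0^{q+1}=c_1^{q+1}$: matching the two coefficients gives $\bar c_1=\lambda c_0$ and $\bar c_0=\lambda c_1$, and eliminating $\lambda$ yields $(c_1/c_0)^{q+1}=1$. Applying this to $L_k$ for each $k$ with $m\nmid k$ and using $(\epsilon^{j_2k})^{q+1}=1$ gives
\[
b^{q+1}=(1-\zeta^k)^{q+1}=(1-\zeta^k)(1-\zeta^{-k})=2-(\zeta^k+\zeta^{-k}).
\]
Since the left side is independent of $k$, the quantity $\zeta^k+\zeta^{-k}$ must take the same value for every $k\not\equiv 0\pmod m$. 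If $m\ge 3$, then $k=1$ and $k=2$ are both admissible, so $\zeta+\zeta^{-1}=\zeta^2+\zeta^{-2}$; writing $c=\zeta+\zeta^{-1}$ and using $c^2=\zeta^2+\zeta^{-2}+2$ turns this into $c^2=c+2$, forcing $c\in\{2,-1\}$ (with the evident modification in characteristic $2$). The value $c=2$ gives $(\zeta-1)^2=0$, hence $\zeta=1$, contradicting $m\ge 3$; the value $c=-1$ gives $\zeta^2+\zeta+1=0$, hence $o(\zeta)=3$. Thus $m=3$ whenever $m\ge 3$, and together with $m\ge 2$ this yields $o(\epsilon^{j_1})\in\{2,3\}$. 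This order dichotomy, and the fact that it is the only obstruction, is the main point of the lemma.

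With $m$ determined, the rest is bookkeeping. The values $s_k,t_k$ are read off from \eqref{sk} and \eqref{tk} with ``$j_1k\equiv 0$'' replaced by ``$m\mid k$'', and then $e_k=r-2s_k-t_k$ since $\tau_k=0$; this gives the stated $s_k,t_k,e_k$ in both (i) ($m=2$) and (ii) ($m=3$). For $\pi(k)$ I would treat the two ranges separately. When $m\mid k$, $\pi(k)$ is precisely the quantity supplied by Lemma~\ref{L4.2}, namely $-2j_2k(q+1)/d+\beta$ with $b^{(q^2-1)/d}=\epsilon^\beta$. When $m\nmid k$, solving $\tilde L_k=\lambda_kL_k$ gives $\lambda_k=(1-\zeta^{-k})/(b\epsilon^{j_2k})$, and raising to the power $(q+1)/d$ produces $\epsilon^{\pi(k)}$.

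To recast these in closed form I would use three auxiliary facts. First, the equal-norm identity specializes to $b^{q+1}=4$ when $m=2$ (where $\zeta=-1$, so $1-\zeta^k=2$ for odd $k$) and to $b^{q+1}=3$ when $m=3$ (where $\zeta+\zeta^{-1}=-1$); these let me relate $\beta$, the exponent $\theta$ with $(2/b)^{(q+1)/d}=\epsilon^\theta$, and the exponent $\eta$ with $((1-\epsilon^{j_1})/b)^{(q+1)/d}=\epsilon^\eta$. For instance $b^{(q^2-1)/d}=(b^{q-1})^{(q+1)/d}=(4/b^2)^{(q+1)/d}=\epsilon^{2\theta}$ gives $\beta=2\theta$ in case (i). Second, in case (ii) I would use the cyclotomic relation $1-\zeta^2=-\zeta^2(1-\zeta)$ to rewrite $1-\zeta^{-k}$ in terms of $1-\zeta$ according to the residue of $k$ modulo $3$, and $3j_1\equiv 0\pmod d$ together with $\epsilon^{q+1}=1$ to replace $2j_1(q+1)/d$ by $-j_1(q+1)/d$ in the exponent. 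Third, I would use the identity $(-1)^{(q+1)/d}=\epsilon^{(q+1)/\gcd(2,d)}$ (checked by separating $d$ even from $d$ odd) to absorb the sign coming from $-\zeta^2$ into the stated $(q+1)/\gcd(2,d)$ term. Assembling these yields the displayed formulas for $\pi(k)$; the delicate point here is keeping careful track of the residue of $k$ modulo $3$ in case (ii), where the two residues are tied together only through the relations $1+\zeta+\zeta^2=0$ and $b^{q+1}=3$.
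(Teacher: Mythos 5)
Your proposal follows the paper's own route: self-duality $\tilde L_k=\lambda_kL_k$ forces the norm condition $(1-\epsilon^{j_1k})(1-\epsilon^{-j_1k})=b^{q+1}$ for every $k$ with $j_1k\not\equiv 0\pmod d$, and since this is a quadratic condition on $\epsilon^{j_1k}$ it bounds $o(\epsilon^{j_1})$ by $3$ (your reformulation via $c^2=c+2$ with $c=\zeta+\zeta^{-1}$ is just a more explicit packaging of the paper's ``at most two roots'' observation); the values $s_k,t_k,e_k$ are read off from \eqref{sk}--\eqref{tk}, and $\pi(k)$ is computed exactly as in the paper by raising $\lambda_k=(1-\epsilon^{-j_1k})/(b\epsilon^{j_2k})$ to the power $(q+1)/d$, invoking Lemma~\ref{L4.2} when $j_1k\equiv 0\pmod d$, relating $\beta$ to $2\theta$ (resp.\ to $\eta$), and using $(-1)^{(q+1)/d}=\epsilon^{(q+1)/\text{gcd}(2,d)}$. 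The proposal is correct and essentially identical in approach to the paper's proof.
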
 

\begin{proof}
If $j_1k\not\equiv 0\pmod d$, then $L_k=1-\epsilon^{j_1k}+b\epsilon^{j_2k}X^{i_2}\in\mathcal L_k(t_k,0,\lambda_k)$. Since $\tilde L_k=\lambda_kL_k$, we have 
\[
\frac{(1-\epsilon^{j_1k})^q}{b\epsilon^{j_2k}}=\lambda_k.
\]
It follows that
\[
1=\Bigl(\frac{(1-\epsilon^{j_1k})^q}{b\epsilon^{j_2k}}\Bigr)^{q+1}=\Bigl(\frac{1-\epsilon^{j_1k}}b\Bigr)^{q+1},
\]
i.e.,
\begin{equation}\label{2.20}
(1-\epsilon^{j_1k})(1-\epsilon^{-j_1k})=b^{q+1}.
\end{equation}
Therefore, $\epsilon^{j_1k}$ is a root of
\begin{equation}\label{2.21}
(1-x)(1-x^{-1})=b^{q+1}
\end{equation}
whenever $\epsilon^{j_1k}\ne 1$. Since \eqref{2.21} has at most two solutions, we have $o(\epsilon^{j_1})\le 3$.

\medskip
(i) Assume that $o(\epsilon^{j_1})=2$, i.e., $\epsilon^{j_1}=-1$. The formulas for $s_k$ and $t_k$ follow from \eqref{sk} and \eqref{tk}, and the formula for $e_k$ is obvious. It remains to prove the formula for $\pi(k)$. 

For $k\not\equiv 0\pmod 2$,
\[
\epsilon^{\pi(k)}=\lambda_k^{(q+1)/d}=\Bigl(\frac 2{b\epsilon^{j_2k}}\Bigr)^{(q+1)/d}=\Bigl(\frac 2b\Bigr)^{(q+1)/d}\epsilon^{-j_2k(q+1)/d}=\epsilon^{-j_2k(q+1)/d+\theta},
\]
where $(2/b)^{(q+1)/d}=\epsilon^\theta$. Hence 
\[
\pi(k)=-j_2k\frac{q+1}d+\theta.
\]
For $k\equiv 0\pmod 2$, by Lemma~\ref{L4.2}
\[
\pi(k)=-2j_2k\frac{q+1}d+\beta,
\]
where $b^{(q^2-1)/d}=\epsilon^\beta$. Since 
\[
\epsilon^{-2\theta}=\epsilon^{\theta(q-1)}=b^{-(q^2-1)/d}=\epsilon^{-\beta},
\]
we have $\beta=2\theta$.

\medskip
(ii) Assume that $o(\epsilon^{j_1})=3$ and write $\epsilon^{j_1}=\omega$. Again, we only have to prove the formula for $\pi(k)$.

For $k\not\equiv 0\pmod 3$,
\begin{align*}
\epsilon^{\pi(k)}\,&=\Bigl(\frac{1-\omega^k}{b\epsilon^{j_2k}}\Bigr)^{(q+1)/d}=\Bigl(\frac{1-\omega^k}{1-\omega}\Bigr)^{(q+1)/d}\Bigl(\frac{1-\omega}b\Bigr)^{(q+1)/d}\epsilon^{-j_2k(q+1)/d}\cr
&=\Bigl(\frac{1-\omega^k}{1-\omega}\Bigr)^{(q+1)/d}\epsilon^{-j_2k(q+1)/d+\eta},
\end{align*}
where $((1-\omega)/b)^{(q+1)/d}=\epsilon^\eta$. Note that 
\[
\Bigl(\frac{1-\omega^{-1}}{1-\omega}\Bigr)^{(q+1)/d}=(-\omega^{-1})^{(q+1)/d}=(-1)^{(q+1)/d}\epsilon^{-j_1(q+1)/d},
\]
where
\[
(-1)^{(q+1)/d}=\left.\begin{cases}
\epsilon^{(q+1)/2}&\text{if $d$ is even}\cr
1&\text{if\ $d$ is odd}
\end{cases}\right\}=\epsilon^{(q+1)/\text{gcd}(2,d)}.
\]
Hence 
\[
\pi(k)=-(j_2k+j_1)\frac{q+1}d+\eta+\frac{q+1}{\text{gcd}(2,d)}.
\]
For $k\equiv 0\pmod 3$, by Lemma~\ref{L4.2}, 
\[
\pi(k)=-2j_2k\frac{q+1}d+\beta,
\]
where $b^{(q^2-1)/d}=\epsilon^\beta$. Since 
\begin{align*}
\epsilon^{-2\eta}\,&=\epsilon^{\eta(q-1)}=\Bigl(\frac{1-\omega}b\Bigr)^{(q^2-1)/d}=\epsilon^{-\beta}\Bigl(\frac{1-\omega^{-1}}{1-\omega}\Bigr)^{(q+1)/d}\cr
&=\epsilon^{-\beta-j_1(q+1)/d}(-1)^{(q+1)/d}=\epsilon^{-\beta-j_1(q+1)/d+(q+1)/\text{gcd}(2,d)},
\end{align*}
we have 
\[
\beta=-j_1\frac{q+1}d+2\eta+\frac{q+1}{\text{gcd}(2,d)}.
\]
\end{proof}

\medskip
{\bf Case 3.} Assume that $0<i_1<i_2<(q+1)/d$. Then
\[
[M_{ik}]=\bbordermatrix{{}\cr
\scriptstyle 0& 1&\cdots&1\cr
\cr
\scriptstyle i_1& a\epsilon^{j_1\cdot 0}&\cdots&a\epsilon^{j_1(d-1)}\cr
\cr
\scriptstyle i_2& b\epsilon^{j_2\cdot 0}&\cdots&b\epsilon^{j_2(d-1)}\cr
\cr},
\]
\[
L_k(X)=1+a\epsilon^{j_1k}X^{i_1}+b\epsilon^{j_2k}X^{i_2},
\]
\[
s_k=0,\quad t_k=i_2.
\]

\begin{lem}\label{L4.4}
If $\tau_k=0$, then $i_1=i_2/2$, $e_k=r-i_2$, and 
\[
\pi(k)=-2j_1k\frac{q+1}d+\alpha,
\]
where $a^{(q^2-1)/d}=\epsilon^\alpha$ and $b=a^{1-q}\epsilon^{(2j_1-j_2)k}$.
\end{lem}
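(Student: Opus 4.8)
The plan is to exploit the self-dual constraint hidden in the hypothesis $\tau_k=0$. By Definition~\ref{D2.2}, $\tau_k=0$ means $L_k\in\mathcal L_k(i_2,0;\lambda_k)$, i.e., $L_k$ has degree $i_2$ and satisfies $\tilde L_k=\lambda_k L_k$. So the first step is to write out $\tilde L_k$ explicitly. Using $\epsilon\in\mu_{q+1}$, hence $\overline{\epsilon^{jk}}=\epsilon^{-jk}$, together with $\tilde L_k(X)=X^{i_2}\overline{L_k}(X^{-1})$, I obtain
\[
\tilde L_k(X)=\bar b\,\epsilon^{-j_2 k}+\bar a\,\epsilon^{-j_1 k}X^{i_2-i_1}+X^{i_2}.
\]

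The crucial second step is a degree-matching argument, which I expect to be the only genuinely nontrivial point. The polynomial $\lambda_k L_k$ is supported on the exponents $\{0,i_1,i_2\}$, while $\tilde L_k$ is supported on $\{0,i_2-i_1,i_2\}$. Since $0<i_1<i_2$ forces $0<i_2-i_1<i_2$, equality of these two trinomials requires the two middle exponents to coincide: $i_2-i_1=i_1$, that is, $i_1=i_2/2$. This is the main structural conclusion; everything that follows is bookkeeping.

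With $i_1=i_2/2$ in hand, I would compare the three coefficients of $\tilde L_k=\lambda_k L_k$ term by term. The constant and leading terms give $\lambda_k=\bar b\,\epsilon^{-j_2 k}$ and $\lambda_k b\,\epsilon^{j_2 k}=1$, while the middle term gives $\bar a\,\epsilon^{-j_1 k}=\lambda_k a\,\epsilon^{j_1 k}$. Solving the middle relation yields $\lambda_k=a^{q-1}\epsilon^{-2j_1 k}$ (using $\bar a=a^q$), and substituting into the leading-term relation gives $b=a^{1-q}\epsilon^{(2j_1-j_2)k}$, as claimed. One checks $b^{q+1}=1$ automatically, so the constant-term expression for $\lambda_k$ is consistent with the leading-term one, and $\lambda_k\in\mu_{q+1}$ as required.

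Finally, the value $e_k=r-i_2$ is immediate from $e_k=r-2s_k-t_k+\tau_k$ with $s_k=0$, $t_k=i_2$, $\tau_k=0$. For $\pi(k)$, I use that $\lambda_k\in A_{\pi(k)}$ means $\epsilon^{\pi(k)}=\lambda_k^{(q+1)/d}$; raising $\lambda_k=a^{q-1}\epsilon^{-2j_1 k}$ to the $(q+1)/d$ power and writing $a^{(q^2-1)/d}=\epsilon^\alpha$ gives $\epsilon^{\pi(k)}=\epsilon^{\alpha-2j_1 k(q+1)/d}$, whence $\pi(k)=-2j_1 k\frac{q+1}d+\alpha$ in $\Bbb Z/d\Bbb Z$.
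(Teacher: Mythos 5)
Your proposal is correct and follows essentially the same route as the paper: write out $\tilde L_k=\lambda_k L_k$, match the supports of the two trinomials to force $i_1=i_2/2$, compare coefficients to obtain $\lambda_k$ and $b=a^{1-q}\epsilon^{(2j_1-j_2)k}$, and then read off $e_k$ and $\pi(k)$ from $\lambda_k^{(q+1)/d}$. The only cosmetic difference is that you solve for $\lambda_k$ from the middle coefficient and substitute into the leading one, while the paper takes $\lambda_k$ from the constant term; the content is identical.
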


\begin{proof}
Clearly, $e_k=r-i_2$. Since $\tilde L_k=\lambda_kL_k$, i.e.,
\[
\overline{b\epsilon^{j_2k}}+\overline{a\epsilon^{j_1k}}X^{i_2-i_1}+X^{i_2}=\lambda_k(1+a\epsilon^{j_1k}X^{i_1}+b\epsilon^{j_2k}X^{i_2}),
\]
we have $i_1=i_2/2$ and $(\overline{b\epsilon^{j_2k}},\overline{a\epsilon^{j_1k}},1)=\lambda_k(1,a\epsilon^{j_1k},b\epsilon^{j_2k})$. Hence
\[
\lambda_k=\overline{b\epsilon^{j_2k}}=b^q\epsilon^{-j_2k}
\]
and
\[
\overline{a\epsilon^{j_1k}}\cdot b\epsilon^{j_2k}=a\epsilon^{j_1k},\quad\text{i.e.,}\quad b=a^{1-q}\epsilon^{(2j_1-j_2)k}.
\]
We have
\begin{align*}
\epsilon^{\pi(k)}\,&=\lambda_k^{(q+1)/d}=(b^q\epsilon^{-j_2k})^{(q+1)/d}=((a^{q-1}\epsilon^{-(2j_1-j_2)k}\cdot\epsilon^{-j_2k})^{(q+1)/d}\cr
&=a^{(q^2-1)/d}\epsilon^{-2j_1k(q+1)/d}=\epsilon^{-2j_1k(q+1)/d+\alpha},
\end{align*}
where $a^{(q^2-1)/d}=\epsilon^\alpha$. Hence
\[
\pi(k)=-2j_1k\frac{q+1}d+\alpha.
\]
\end{proof}

\noindent{\bf Remark.} 
If $\tau_k=0$ for all $0\le k<d$, then by Lemma~\ref{L4.4}, $i_1=i_2/2$, $2j_1-j_2\equiv 0\pmod d$ and $b=a^{1-q}$. Consequently,
\begin{align*}
h(X)\,&=1+aX^{i_1+j_1(q+1)/d}+bX^{i_2+j_2(q+1)/d}\cr
&\equiv 1+aX^{i_1+j_1(q+1)/d}+bX^{2(i_1+j_1(q+1)/d)}\pmod{X^{q+1}-1}\cr
&=h_1(X),
\end{align*}
where
\[
h_1(X)=1+aX^l+bX^{2l},\quad l=i_1+j_1(q+1)/d.
\]
Hence
\[
X^rh(X^{q-1})\equiv X^rh_1(X^{q-1})\pmod{X^{q^2-1}-1}.
\]
Since $b=a^{1-q}$, $h_1(X)$ is self-dual. In general, when $h_1(X)$ is self dual, PPs of $\f_{q^2}$ of the form $X^rh_1(X^{q-1})$ are known; see Example~\ref{E5.1}.

\begin{lem}\label{L4.5}
If $\tau_k\in[(q+1)/d-t_k,t_k]$, then precisely one of the following occurs.
\begin{itemize}
\item[(i)]
\[
t_k=\tau_k=\frac12\Bigl(\frac{q+1}d+1\Bigr),\quad i_1=\frac12\Bigl(\frac{q+1}d-1\Bigr),\quad i_2=\frac12\Bigl(\frac{q+1}d+1\Bigr),
\]
\[
a=b^q\epsilon^{-(j_1+j_2+1)k},\quad e_k=r,\quad \pi(k)=0.
\]

\item[(ii)]
\[
t_k=\frac12\Bigl(\frac{q+1}d+1\Bigr),\quad \tau_k=\frac12\Bigl(\frac{q+1}d-1\Bigr),\quad i_1=1,\quad i_2=\frac12\Bigl(\frac{q+1}d+1\Bigr),
\]
\[
a=b^{1-q}\epsilon^{(2j_2-j_1+1)k},\quad b^{(q^2-1)/d}=\epsilon^\beta,\quad e_k=r-1,
\]
\[
\pi(k)=-(2j_2+1)k\frac{q+1}d+\beta.
\]
\end{itemize}
\end{lem}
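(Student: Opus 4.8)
The plan is to feed $L_k(X)=1+a\epsilon^{j_1k}X^{i_1}+b\epsilon^{j_2k}X^{i_2}$ (for which $s_k=0$ and $t_k=i_2$) into Lemma~\ref{L2.1}(iii) and read off which shapes the decomposition $L_k=P+X^{(q+1)/d-\tau_k}Q$ can take when $L_k$ has exactly three terms. First I would record the supports: $P$ lives on the exponent range $[0,\,i_2-\tau_k]$ while $X^{(q+1)/d-\tau_k}Q$ lives on $[(q+1)/d-\tau_k,\,i_2]$, and since $i_2<(q+1)/d$ these two blocks are disjoint with a genuine gap between them. Hence the constant term $X^0$ of $L_k$ must sit in $P$ and the leading term $X^{i_2}$ must sit in the high block, forcing $P(0)\neq0$ and $Q\neq0$; the remaining term $X^{i_1}$ must then fall in exactly one of the two blocks, since its coefficient $a\epsilon^{j_1k}$ is nonzero and the gap carries only zero coefficients.

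Next I would invoke the exact degree conditions of Lemma~\ref{L2.1}(iii), namely $\deg P=t_k-\tau_k$ and $\deg Q=\tau_k+t_k-(q+1)/d$ with $P(0),Q(0)\neq0$. The top exponent of $P$, equal to $i_2-\tau_k$, must be an actual monomial of $L_k$, so $i_2-\tau_k\in\{0,i_1\}$; likewise the bottom exponent of the high block, $(q+1)/d-\tau_k$, must be a monomial of $L_k$, so $(q+1)/d-\tau_k\in\{i_1,i_2\}$. Crossing these four possibilities, one combination forces $L_k$ to be a binomial and another forces $i_2\ge(q+1)/d$; both are discarded, and the two survivors are the configurations in the statement: (i) $P=1$, $\tau_k=t_k=i_2$, and $i_1+i_2=(q+1)/d$; and (ii) $Q$ constant, $\tau_k=(q+1)/d-i_2$, and $2i_2-i_1=(q+1)/d$.

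With a configuration fixed, the coefficient identities follow from the self-dual relations $\tilde P=\lambda_kP$ and $\tilde Q=\lambda_k\epsilon^kQ$ of Definition~\ref{D2.2}. In case (i), $\tilde P=\lambda_kP$ with $P=1$ forces $\lambda_k=1$, whence $\pi(k)=0$, and matching the two coefficients of the binomial $Q$ gives $a=b^q\epsilon^{-(j_1+j_2+1)k}$. In case (ii), the constant $Q=b\epsilon^{j_2k}$ yields $\lambda_k=b^{q-1}\epsilon^{-(2j_2+1)k}$, while the binomial relation $\tilde P=\lambda_kP$ on $P$ gives $a=b^{1-q}\epsilon^{(2j_2-j_1+1)k}$. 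In both cases $e_k=r-2s_k-t_k+\tau_k=r-i_2+\tau_k$ is immediate, and $\pi(k)$ is obtained by raising $\lambda_k$ to the power $(q+1)/d$ and writing the outcome as $\epsilon^{\pi(k)}$, exactly as in the computations of Lemmas~\ref{L4.1}--\ref{L4.4}.

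The step I expect to be the real obstacle is sharpening the two surviving configurations to the \emph{exact} exponents asserted, i.e.\ showing the non-constant self-dual block ($Q$ in case (i), $P$ in case (ii)) has degree precisely $1$, so that $i_2=\tfrac12\bigl((q+1)/d+1\bigr)$ and correspondingly $i_1=\tfrac12\bigl((q+1)/d-1\bigr)$ or $i_1=1$. The support-and-degree bookkeeping above only delivers the relations $i_1+i_2=(q+1)/d$ and $2i_2-i_1=(q+1)/d$; to force the block down to a degree-$1$ self-dual binomial I would have to bring in the coprimality constraint $\gcd(L_k,X^{(q+1)/d}-\epsilon^k)=1$ together with the requirement that $h$ be a \emph{genuine} trinomial (exactly three nonzero $a_{ij}$), arguing that a self-dual binomial block $c_0+c_wX^w$ with $w\ge2$ either reduces the situation to a self-dual trinomial already handled by Lemma~\ref{L4.4} and the Remark following it, or is incompatible with the standing hypotheses. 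This degree reduction is the crux of the argument; once it is in place, the rest is the routine character arithmetic producing $e_k$ and $\pi(k)$.
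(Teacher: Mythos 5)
Your reduction to the two surviving configurations is exactly the paper's argument: the constant term of $L_k$ must lie in $P$, the leading term in the shifted block, the middle term in precisely one of them, and crossing $i_2-\tau_k\in\{0,i_1\}$ with $(q+1)/d-\tau_k\in\{i_1,i_2\}$ leaves (i) $P$ constant with $\tau_k=t_k=i_2$ and $i_1+i_2=(q+1)/d$, and (ii) $Q$ constant with $\tau_k=(q+1)/d-i_2$ and $2i_2-i_1=(q+1)/d$. Your coefficient computations ($\lambda_k=1$, $a=b^q\epsilon^{-(j_1+j_2+1)k}$ in case (i); $\lambda_k=b^{q-1}\epsilon^{-(2j_2+1)k}$, $a=b^{1-q}\epsilon^{(2j_2-j_1+1)k}$ in case (ii)) and the resulting $e_k$ and $\pi(k)$ also agree with the paper. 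Up to the degree reduction you flag, your proof is the paper's proof.

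The step you single out is a genuine gap, and you should be aware that the paper does not close it either: the published proof writes down the two systems with $t_k=(q+1)/d-\tau_k+1$ (resp.\ $t_k-\tau_k=1$) already built in, citing only Figure~4, i.e.\ it tacitly assumes the non-constant block occupies two \emph{adjacent} exponents. Your instinct that this does not follow from support bookkeeping is correct, and the escape routes you propose (the gcd condition, the genuine-trinomial requirement) do not supply it. Indeed $Q=c_0+c_wX^w$ with $w\ge 2$ satisfies $\tilde Q=\lambda\epsilon^kQ$ for any $c_0\ne 0$ once $c_w=\overline{\lambda\epsilon^k c_0}$, and the coprimality with $X^{(q+1)/d}-\epsilon^k$ holds generically. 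Concretely, for $q=7$, $d=1$, the trinomial $L=1+aX^2+a^qX^6$ satisfies $\tilde L(x)=x^6L(x)$ on $\mu_8$, so $L\in\mathcal L_0(6,6;1)$ with $i_1=2$, $i_2=6$, $i_1+i_2=(q+1)/d$, yet it matches neither alternative of the lemma (both would force $i_2=\frac12\bigl(\frac{q+1}d+1\bigr)$, not an integer here). So what your configurations actually yield is $i_1+i_2=(q+1)/d$, $e_k=r$, $\pi(k)=0$ in case (i) and $2i_2-i_1=(q+1)/d$, $e_k=r-i_1$, $\pi(k)=-(2j_2+1)k\frac{q+1}d+\beta$ in case (ii), with the stated exponents being the special case where the non-constant block has degree $1$. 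The extra configurations are not incompatible with the standing hypotheses; when they occur for all $k$ they reduce, by the same computation as in the Remark following the lemma (which uses only $i_1+i_2\equiv 0$, resp.\ $2i_2\equiv i_1$, modulo the relevant congruence on $j_1,j_2$), to a self-dual $h_1$ and hence to known PPs, but a complete proof of the lemma as stated would require an argument, or an added hypothesis, that is not present either in your proposal or in the paper.
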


\begin{proof}
Since 
\[
L_k=1+a\epsilon^{j_1k}X^{i_1}+b\epsilon^{j_2k}X^{i_2}=P+X^{(q+1)/d-\tau_k}Q
\]
is a trinomial, where $\deg P=t_k-\tau_k$ and $\deg Q=\tau_k+t_k-(q+1)/d$ (see Figure~\ref{F4}), we have 
\[
\begin{cases}
t_k-\tau_k=0,\vspace{0.3em}\cr
\displaystyle t_k=\frac{q+1}d-\tau_k+1,\vspace{0.3em}\cr
\displaystyle i_1=\frac{q+1}d-\tau_k,\vspace{0.3em}\cr
i_2=t_k,
\end{cases}
\qquad\text{or}\qquad
\begin{cases}
t_k-\tau_k=1,\vspace{0.3em}\cr
\displaystyle t_k=\frac{q+1}d-\tau_k,\vspace{0.3em}\cr
i_1=1,\cr
i_2=t_k,
\end{cases}
\]
i.e.,
\begin{equation}\label{2.22}
\begin{cases}
\displaystyle t_k=\tau_k=\frac12\Bigl(\frac{q+1}d+1\Bigr),\vspace{0.3em}\cr
\displaystyle i_1=\frac12\Bigl(\frac{q+1}d-1\Bigr),\vspace{0.3em}\cr
\displaystyle i_2=\frac12\Bigl(\frac{q+1}d+1\Bigr),\cr
\end{cases}
\end{equation}
or
\begin{equation}\label{2.23}
\begin{cases}
\displaystyle t_k=\frac12\Bigl(\frac{q+1}d+1\Bigr),\vspace{0.3em}\cr
\displaystyle \tau_k=\frac12\Bigl(\frac{q+1}d-1\Bigr),\vspace{0.3em}\cr
i_1=1,\vspace{0.3em}\cr
\displaystyle i_2=\frac12\Bigl(\frac{q+1}d+1\Bigr).
\end{cases}
\end{equation}

\begin{figure}
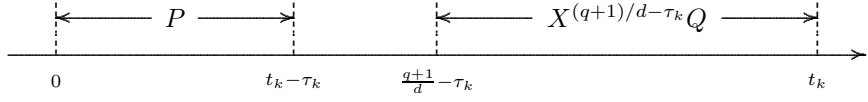

\[
\beginpicture
\setcoordinatesystem units <1.8em, 2em> point at 0 0

\arrow <5pt> [.2,.67] from -9 0 to 9 0 
\arrow <5pt> [.2,.67] from -6 0.7 to -8 0.7 
\arrow <5pt> [.2,.67] from -5 0.7 to -3 0.7
\arrow <5pt> [.2,.67] from 2 0.7 to 0 0.7
\arrow <5pt> [.2,.67] from 6 0.7 to 8 0.7

\setdashes<0.18em>
\plot -8 0   -8 1 /
\plot -3 0   -3 1 /
\plot  0 0   0 1 /
\plot 8 0   8 1 /

\put {$\scriptstyle 0$}  at -8 -0.5
\put {$\scriptstyle t_k-\tau_k$}  at -3 -0.5
\put {$\scriptstyle \frac{q+1}d-\tau_k$}  at 0 -0.5
\put {$\scriptstyle t_k$}  at 8 -0.5
\put {$P$} at -5.5 0.7
\put {$X^{(q+1)/d-\tau_k}Q$}  at 4 0.7

\endpicture
\]
\caption{When $P+X^{(q+1)/d-\tau_k}Q$ is a trinomial}\label{F4}
\end{figure}

\medskip

(i) Assume \eqref{2.22}. First, note that $e_k=r-t_k+\tau_k=r$. In this case,
\[
L_k=1+X^{\frac12(\frac{q+1}d-1)}(a\epsilon^{j_1k}+b\epsilon^{j_2k}X),
\]
where $P=1$ and $Q=a\epsilon^{j_1k}+b\epsilon^{j_2k}X$. We have $\lambda_k=\tilde P/P=1$ and 
\[
\lambda_k\epsilon^k=\frac{\tilde Q}Q=\frac{\overline{b\epsilon^{j_2k}}+\overline{a\epsilon^{j_1k}}X}{a\epsilon^{j_1k}+b\epsilon^{j_2k}X}.
\]
Hence
\[
\frac{\overline{b\epsilon^{j_2k}}}{a\epsilon^{j_1k}}=\epsilon^k,
\]
i.e., $a=b^q\epsilon^{-(j_1+j_2+1)k}$. Clearly, $\pi(k)=0$.

\medskip
(ii) Assume \eqref{2.23}. First, note that $e_k=r-t_k+\tau_k=r-1$. In this case,
\[
L_k=1+a\epsilon^{j_1k}X+X^{\frac 12(\frac{q+1}d+1)}\cdot b\epsilon^{j_2k},
\]
where 
$P=1+a\epsilon^{j_1k}X$ and $Q= b\epsilon^{j_2k}$. Since $\tilde P=\lambda_k P$ and $\tilde Q=\lambda_k\epsilon^k Q$, we have
\[
(a\epsilon^{j_1k})^q=\lambda_k\qquad\text{and}\qquad (b\epsilon^{j_2k})^{q-1}=\lambda_k\epsilon^k.
\]
It follows that
\[
a=\lambda_k^{-1}\epsilon^{-j_1k}=\epsilon^k(b\epsilon^{j_2k})^{1-q}\cdot\epsilon^{-j_1k}=b^{1-q}\epsilon^{(2j_2-j_1+1)k}.
\]
Since
\begin{align*}
\lambda_k^{(q+1)/d}\,&=(a^q\epsilon^{-j_1k})^{(q+1)/d}=(b^{q-1}\epsilon^{-(2j_2-j_1+1)k}\epsilon^{-j_1k})^{(q+1)/d}\cr
&=b^{(q^2-1)/d}\epsilon^{-(2j_2+1)k(q+1)/d}=\epsilon^{-(2j_2+1)k(q+1)/d+\beta},
\end{align*}
where $b^{(q^2-1)/d}=\epsilon^\beta$, we have
\[
\pi(k)=-(2j_2+1)k\frac{q+1}d+\beta.
\]
\end{proof}

\noindent{\bf Remark.}
In Lemma~\ref{L4.5}, if (i) occurs for all $0\le k<d$, then $j_1+j_2+1\equiv 0\pmod d$ and $a=b^q$. We have 
\begin{align*}
h(X)\,&=1+aX^{\frac 12(\frac{q+1}d-1)+j_1\frac{q+1}d}+bX^{\frac 12(\frac{q+1}d+1)+j_2\frac{q+1}d} \cr
&\equiv 1+aX^{\frac 12(\frac{q+1}d-1)+j_1\frac{q+1}d}+bX^{q+1-(\frac 12(\frac{q+1}d-1)+j_1\frac{q+1}d)} \pmod{X^{q+1}-1}\cr
&\equiv X^{q+1-l}h_1(X)\pmod{X^{q+1}-1},
\end{align*}
where
\[
l=\frac 12\Bigl(\frac{q+1}d-1\Bigr)+j_1\frac{q+1}d
\]
and 
\[
h_1(X)=b+X^l+aX^{2l}.
\]
Since $a=b^q$, $h_1(X)$ is self-dual. We have
\[
X^rh(X^{q-1})\equiv X^{r+(q+1-l)(q-1)}h_1(X^{q-1}) \pmod{X^{q^2-1}-1},
\]
and PPs of $\f_{q^2}$ of this type are known (Example~\ref{E5.1}).

Similarly, if (ii) occurs for all $0\le k<d$ in Lemma~\ref{L4.5}, then $2j_2-j_1+1\equiv 0\pmod d$ and $a=b^{1-q}$. We have
\begin{align*}
h(X)\,&=1+aX^{1+j_1\frac{q+1}d}+bX^{\frac 12(\frac{q+1}d+1)+j_2\frac{q+1}d} \cr
&\equiv 1+aX^{2(\frac 12(\frac{q+1}d+1)+j_2\frac{q+1}d)}+bX^{\frac 12(\frac{q+1}d+1)+j_2\frac{q+1}d} \pmod{X^{q+1}-1}\cr
&=h_1(X),
\end{align*}
where
\[
h_1(X)=1+bX^l+aX^{2l},\quad l=\frac 12\Bigl(\frac{q+1}d+1\Bigr)+j_2\frac{q+1}d.
\]
Since $a=b^{1-q}$, $h_1(X)$ is self-dual. We have
\[
X^rh(X^{q-1})\equiv X^rh_1(X^{q-1}) \pmod{X^{q^2-1}-1},
\]
and PPs of $\f_{q^2}$ of this type are known.

\begin{lem}\label{4.6}
If $(q+1)/d\ne 3$, then either Lemma~\ref{L4.4} occurs for $0\le k<d$, or  Lemma~\ref{L4.5} (i) occurs for $0\le k<d$, or  Lemma~\ref{L4.5} (ii) occurs for all $0\le k<d$.
\end{lem}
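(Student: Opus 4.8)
The plan is to exploit the fact that the exponents $i_1$ and $i_2$ are attributes of the fixed trinomial $h$ and so do not depend on $k$, whereas each of the three sub-lemmas pins down $(i_1,i_2)$ in terms of $(q+1)/d$ in a rigid way. Write $N=(q+1)/d$. First I would record that for every $0\le k<d$ the polynomial $L_k=1+a\epsilon^{j_1k}X^{i_1}+b\epsilon^{j_2k}X^{i_2}$ is a genuine trinomial with $L_k(0)=1\ne 0$, $s_k=0$, and $t_k=i_2$, because $a,b\ne 0$. Since we are looking at a trinomial produced by the algorithm, the monomial property \eqref{2.13} holds on each $A_k$, so Lemma~\ref{L2.1} forces, for each $k$, either $\tau_k=0$ (the hypothesis of Lemma~\ref{L4.4}) or $\tau_k\in[N-t_k,t_k]$ (the hypothesis of Lemma~\ref{L4.5}, hence its case (i) or case (ii)).

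Next I would read off from each sub-lemma the constraint it places on the $k$-independent pair $(i_1,i_2)$: Lemma~\ref{L4.4} forces $i_1=i_2/2$; Lemma~\ref{L4.5}(i) forces $(i_1,i_2)=(\tfrac12(N-1),\tfrac12(N+1))$; and Lemma~\ref{L4.5}(ii) forces $(i_1,i_2)=(1,\tfrac12(N+1))$. The heart of the argument is that these three constraints are pairwise incompatible unless $N=3$. Substituting the Lemma~\ref{L4.5}(i) values into $i_1=i_2/2$ gives $\tfrac12(N-1)=\tfrac14(N+1)$, i.e.\ $N=3$; substituting the Lemma~\ref{L4.5}(ii) values into $i_1=i_2/2$ gives $1=\tfrac14(N+1)$, i.e.\ $N=3$; and equating the two values of $i_1$ demanded by Lemma~\ref{L4.5}(i) and (ii) gives $\tfrac12(N-1)=1$, i.e.\ $N=3$.

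With these three one-line computations I would conclude as follows. If two distinct indices $k,k'$ realized two different sub-lemmas, then $(i_1,i_2)$ would have to satisfy both constraints simultaneously, which by the previous paragraph forces $N=3$, against the hypothesis. Hence when $N=(q+1)/d\ne 3$, all indices $0\le k<d$ realize one and the same sub-lemma; since every $k$ realizes some sub-lemma (the monomial property holds for each $k$), this common alternative is genuinely one of the three asserted in the statement.

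I do not expect a deep obstacle here; the argument is a finite compatibility check. The point that requires care is that the coefficient relations appearing in Lemmas~\ref{L4.4} and~\ref{L4.5} (for instance $b=a^{1-q}\epsilon^{(2j_1-j_2)k}$) do depend on $k$, so a priori the $\tau_k$-type could vary with $k$. The resolution is that it is the purely combinatorial exponent conditions on $(i_1,i_2)$, not these coefficient relations, that decide the type, and those conditions are $k$-independent; this is exactly what makes the incompatibility argument go through. Because $a,b\ne 0$ keep $L_k$ a true trinomial for every $k$, the dichotomy of Lemma~\ref{L2.1} applies verbatim to each index and no degeneracy (a vanishing middle or top coefficient) can intervene.
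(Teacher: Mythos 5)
Your proof is correct and follows essentially the same route as the paper: both arguments observe that Lemma~\ref{L4.4}, Lemma~\ref{L4.5}~(i) and Lemma~\ref{L4.5}~(ii) impose the $k$-independent constraints $i_1=i_2/2$, $(i_1,i_2)=(\tfrac12(\tfrac{q+1}d-1),\tfrac12(\tfrac{q+1}d+1))$ and $(i_1,i_2)=(1,\tfrac12(\tfrac{q+1}d+1))$ respectively, and that any two of these together force $(q+1)/d=3$. Your additional remarks on why each $k$ must realize one of the three sub-lemmas and on the irrelevance of the $k$-dependent coefficient relations are sound elaborations of points the paper leaves implicit.
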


\begin{proof}
In Lemma~\ref{L4.4}, Lemma~\ref{L4.5} (i) and Lemma~\ref{L4.5} (ii), we have
\[
i_1=\frac{i_2}2,\quad 
\begin{cases}
i_1=\displaystyle\frac 12\Bigl(\frac{q+1}d-1\Bigr),\vspace{0.4em}\cr
i_2=\displaystyle\frac 12\Bigl(\frac{q+1}d+1\Bigr),
\end{cases}\ \text{and}\quad
\begin{cases}
i_1=1\vspace{0.4em}\cr
i_2=\displaystyle\frac 12\Bigl(\frac{q+1}d+1\Bigr),
\end{cases}
\]
respectively. Any two of these three conditions imply that $(q+1)/d=3$.
\end{proof}

Partition $\Bbb Z/d\Bbb Z$ as 
\begin{equation}\label{K012}
\Bbb Z/d\Bbb Z=K_0\sqcup K_1\sqcup K_2,
\end{equation}
where Lemma~\ref{L4.4} occurs for $k\in K_0$, Lemma~\ref{L4.5} (i) occurs for all $k\in K_1$, and  Lemma~\ref{L4.5} (ii) occurs for all $k\in K_2$. Assume that at least two of $K_0,K_1,K_2$ are nonempty. Then $(q+1)/d=3$, $i_1=1$ and $i_2=2$. Since $\text{gcd}(e_k,3)=\text{gcd}(e_k,(q+1)/d)=1$, it follows from Lemmas~\ref{L4.4} and \ref{L4.5} that one of $K_0,K_1,K_2$ must be empty:
\[
\begin{cases}
\text{if $r\equiv 0\pmod 3$, then $K_1=\emptyset$};\cr
\text{if $r\equiv 1\pmod 3$, then $K_2=\emptyset$};\cr
\text{if $r\equiv -1\pmod 3$, then $K_0=\emptyset$}.
\end{cases}
\]
It also follows from Lemmas~\ref{L4.4} and \ref{L4.5} that
\begin{equation}\label{ab}
\begin{cases}
\text{if $k_0\in K_0$, then $b=a^{1-q}\epsilon^{(2j_1-j_2)k_0}$};\cr
\text{if $k_1\in K_1$, then $a=b^q\epsilon^{-(j_1+j_2+1)k_1}$};\cr
\text{if $k_2\in K_2$, then $a=b^{1-q}\epsilon^{(2j_2-j_1+1)k_2}$}.
\end{cases}
\end{equation}
Any combination of two of the above three equations allow us to determine $a$ and $b$ up to a third root of unity.

\begin{lem}\label{L4.7}
We have the following equivalences:
\begin{equation}\label{Claim4-1}
\begin{cases}
b=a^{1-q}\epsilon^{(2j_1-j_2)k_0}\cr
a=b^q\epsilon^{-(j_1+j_2+1)k_1}
\end{cases}
\Leftrightarrow\quad
\begin{cases}
a^3=\epsilon^{-(2j_1-j_2)k_0-(j_1+j_2+1)k_1}\cr
b=a^2\epsilon^{(2j_1-j_2)k_0}
\end{cases}
\end{equation}
\begin{equation}\label{Claim4-2}
\begin{cases}
b=a^{1-q}\epsilon^{(2j_1-j_2)k_0}\cr
a=b^{1-q}\epsilon^{(2j_2-j_1+1)k_2}
\end{cases}
\Leftrightarrow\quad
\begin{cases}
a^3=\epsilon^{-2(2j_1-j_2)k_0-(2j_2-j_1+1)k_2}\cr
b=a^2\epsilon^{(2j_1-j_2)k_0}
\end{cases}
\end{equation}
\begin{equation}\label{Claim4-3}
\begin{cases}
a=b^q\epsilon^{-(j_1+j_2+1)k_1}\cr
a=b^{1-q}\epsilon^{(2j_2-j_1+1)k_2}
\end{cases}
\Leftrightarrow\quad
\begin{cases}
b^3=\epsilon^{-(j_1+j_2+1)k_1-(2j_2-j_1+1)k_2}\cr
a=b^{-1}\epsilon^{-(j_1+j_2+1)k_1}
\end{cases}
\end{equation}
\end{lem}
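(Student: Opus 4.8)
The plan is to prove all three equivalences by the same two-step mechanism, and to carry out \eqref{Claim4-1} in detail while noting that \eqref{Claim4-2} and \eqref{Claim4-3} follow by the identical computation. The two facts that drive everything are $a^{q^2}=a$ (valid since $a\in\f_{q^2}$) and $\epsilon^q=\epsilon^{-1}$ (valid since $\epsilon\in\mu_{q+1}$, as $d\mid q+1$). The only genuinely non-formal point, and the heart of each equivalence, is converting a Frobenius exponent such as $1-q$ into an ordinary exponent $2$; this is legitimate precisely because the relations force the coefficient involved to lie in $\mu_{q+1}$.

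For the forward implication of \eqref{Claim4-1} I would substitute $b=a^{1-q}\epsilon^{(2j_1-j_2)k_0}$ into $a=b^q\epsilon^{-(j_1+j_2+1)k_1}$. Using $a^{q^2}=a$, the exponent $q-q^2$ collapses to $q-1$, and using $\epsilon^q=\epsilon^{-1}$ one arrives at $a^{2-q}=\epsilon^{-(2j_1-j_2)k_0-(j_1+j_2+1)k_1}$. The right-hand side lies in $\mu_d\subseteq\mu_{q+1}$, so raising to the $(q+1)$-st power and using the congruence $(2-q)(q+1)\equiv q+1\pmod{q^2-1}$ forces $a^{q+1}=1$, i.e.\ $a\in\mu_{q+1}$. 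With $a\in\mu_{q+1}$ we have $a^q=a^{-1}$, hence $a^{2-q}=a^3$ and $a^{1-q}=a^2$; the two relations then read $a^3=\epsilon^{-(2j_1-j_2)k_0-(j_1+j_2+1)k_1}$ and $b=a^2\epsilon^{(2j_1-j_2)k_0}$, which is exactly the right system.

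For the converse I would start from $a^3=\epsilon^{-(2j_1-j_2)k_0-(j_1+j_2+1)k_1}\in\mu_d$ and first re-establish $a\in\mu_{q+1}$, here invoking the standing hypothesis $(q+1)/d=3$. Since $\mu_{q+1}$ is cyclic and $3\mid q+1$, the cubes in $\mu_{q+1}$ form exactly the subgroup $\mu_d$, and every cube root in $\f_{q^2}^*$ of an element of $\mu_d$ already lies in $\mu_{q+1}$ (one cube root may be taken inside $\mu_{q+1}$ because $\mu_d=(\mu_{q+1})^3$, and the remaining two differ from it by elements of $\mu_3\subseteq\mu_{q+1}$). Once $a\in\mu_{q+1}$, the simplifications $a^2=a^{1-q}$ and $a^3=a^{2-q}$ run in reverse and recover $b=a^{1-q}\epsilon^{(2j_1-j_2)k_0}$ and $a=b^q\epsilon^{-(j_1+j_2+1)k_1}$. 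The equivalences \eqref{Claim4-2} and \eqref{Claim4-3} go through verbatim: in \eqref{Claim4-2} one eliminates $b$ to reach $a^{2q-1}=\epsilon^{\,\cdots}$, where $(2q-1)(q+1)\equiv q+1$ again yields $a\in\mu_{q+1}$, and in \eqref{Claim4-3} one equates the two expressions for $a$ to reach $b^{2q-1}=\epsilon^{\,\cdots}$, forcing $b\in\mu_{q+1}$ and the cube relation on $b$.

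I expect the main obstacle to be exactly the passage between the exponent $1-q$ and the exponent $2$ in both directions, since a priori $a,b$ are arbitrary nonzero elements of $\f_{q^2}$. In the forward direction this rests on the norm computation $(2-q)(q+1)\equiv q+1$ (respectively $(2q-1)(q+1)\equiv q+1$) modulo $q^2-1$, which is what actually produces $a\in\mu_{q+1}$ (or $b\in\mu_{q+1}$); in the backward direction it rests on the structural fact that, because $(q+1)/d=3$, the cube map on $\mu_{q+1}$ has image $\mu_d$ and no cube root of a $\mu_d$-element escapes $\mu_{q+1}$. Everything beyond these two observations is routine bookkeeping with $a^{q^2}=a$ and $\epsilon^q=\epsilon^{-1}$.
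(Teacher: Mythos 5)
Your proof is correct and follows essentially the same route as the paper's: both arguments hinge on first forcing $a,b\in\mu_{q+1}$ (you via the norm computation $(2-q)(q+1)\equiv q+1\pmod{q^2-1}$ after eliminating one variable, the paper by raising each given relation directly to the $(q+1)$-st power, and in the converse direction by noting $a^3\in\mu_d$ with $3d=q+1$) and then converting the exponent $1-q$ into $2$. The minor differences in bookkeeping order do not constitute a different method.
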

 
\begin{proof}
We only prove \eqref{Claim4-1}. (The proofs of \eqref{Claim4-2} and \eqref{Claim4-3} are similar.)

\medskip

($\Rightarrow$) We have
\[
b^{q+1}=(a^{1-q}\epsilon^{(2j_1-j_2)k_0})^{q+1}=1,
\]
and hence
\[
a^{q+1}=(b^q\epsilon^{-(j_1+j_2+1)k_1})^{q+1}=1.
\]
Now 
\[
b=a^{1-q}\epsilon^{(2j_1-j_2)k_0}=a^2\epsilon^{(2j_1-j_2)k_0},
\]
and
\[
a=b^q\epsilon^{-(j_1+j_2+1)k_1}=(a^2\epsilon^{(2j_1-j_2)k_0})^q\epsilon^{-(j_1+j_2+1)k_1}=a^{-2}\epsilon^{-(2j_1-j_2)k_0-(j_1+j_2+1)k_1},
\]
i.e.,
\[
a^3=\epsilon^{-(2j_1-j_2)k_0-(j_1+j_2+1)k_1}.
\]

\medskip
($\Leftarrow$) First, since $(q+1)/d=3$, it is clear that $a^{q+1}=b^{q+1}=1$. The rest is obvious.
\end{proof}

\begin{lem}\label{L4.8}
Assume that $K_0\ne\emptyset$ and $K_1\ne\emptyset$ in \eqref{K012} and hence $(q+1)/d=3$, $i_1=1$, $i_2=2$, and $r\equiv 1\pmod 3$. Then $d$ is even, and $K_0$ and $K_1$ form the two cosets of $2\Bbb Z/d\Bbb Z$ in $\Bbb Z/d\Bbb Z$. Moreover, $a^3=-1$, $b=\pm a^2$,
\begin{equation}\label{eq:L4.8}
\begin{cases}
\displaystyle 2j_1-j_2\equiv \frac d2\pmod d,\vspace{0.4em}\cr
\displaystyle j_1+j_2+1\equiv \frac d2\pmod d,
\end{cases}
\end{equation}
and $\pi(k)+e_kk=rk$ for $k\in\Bbb Z/d\Bbb Z$. More precisely, either 
\[
q\equiv 11\pmod{18}\quad\text{and}\quad h(X)=1+aX^{(q+1)/3}+bX^{(q+1)/6}
\]
or
\[
q\equiv 5\pmod{18}\quad\text{and}\quad h(X)=1+aX^{2(q+1)/3}+bX^{5(q+1)/6}.
\]
\end{lem}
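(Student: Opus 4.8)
The plan is to turn the hypothesis ``$K_0\ne\emptyset$ and $K_1\ne\emptyset$'' into precise information about $K_0,K_1$ as subsets of $\Bbb Z/d\Bbb Z$, and then read off everything from the relations in Lemmas~\ref{L4.4}, \ref{L4.5} and \ref{L4.7}. By Lemma~\ref{4.6} and the discussion preceding the statement, having two of $K_0,K_1,K_2$ nonempty already forces $(q+1)/d=3$, $i_1=1$, $i_2=2$; since $K_0,K_1\ne\emptyset$, the only class that may be empty is $K_2$, which by the $\gcd(e_k,3)=1$ requirement of Theorem~\ref{T2.3} happens exactly when $r\equiv1\pmod3$. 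Writing $m_0=2j_1-j_2$ and $m_1=j_1+j_2+1$, I would first note that for a fixed trinomial the exponent $\tau_k$ is \emph{uniquely} determined by $L_k$ (a monomial $\lambda_k x^{\tau_k}$ on the $((q+1)/d)$-element set $A_k$ determines $\tau_k$ modulo $(q+1)/d$), so each $k$ lies in exactly one class. Lemma~\ref{L4.4} then says $k\in K_0\Leftrightarrow\epsilon^{m_0k}=ba^{q-1}$, and Lemma~\ref{L4.5}(i) says $k\in K_1\Leftrightarrow\epsilon^{m_1k}=b^qa^{-1}$; hence $K_0$ is a coset of $H_0=\{k:m_0k\equiv0\bmod d\}$ and $K_1$ is a coset of $H_1=\{k:m_1k\equiv0\bmod d\}$, and by hypothesis these two cosets partition $\Bbb Z/d\Bbb Z$.

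The combinatorial heart of the proof is the observation that two cosets of (a priori different) subgroups of the cyclic group $\Bbb Z/d\Bbb Z$ that partition it must both have index $2$. If $H_0=\Bbb Z/d\Bbb Z$ then $K_0=\Bbb Z/d\Bbb Z$ and $K_1=\emptyset$, contradicting $K_1\ne\emptyset$; so $|H_0|\le d/2$, and likewise $|H_1|\le d/2$. Since $|K_0|+|K_1|=d$, both equal $d/2$, which forces $d$ even and $H_0=H_1=2\Bbb Z/d\Bbb Z$ (the unique index-$2$ subgroup of a cyclic group), with $K_0,K_1$ its two cosets. From $\gcd(m_0,d)=|H_0|=d/2$ I obtain $m_0\equiv d/2\pmod d$, and similarly $m_1\equiv d/2$; these are precisely the congruences \eqref{eq:L4.8}.

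Next I would pin down $a$ and $b$. Picking $k_0\in K_0$ and $k_1\in K_1$ and invoking Lemma~\ref{L4.7}\,\eqref{Claim4-1} gives $a,b\in\mu_{q+1}$, $a^3=\epsilon^{-m_0k_0-m_1k_1}$ and $b=a^2\epsilon^{m_0k_0}$. As $d$ is even and $d\mid q+1$, $q$ is odd and $\epsilon^{d/2}=-1$; since $m_0\equiv m_1\equiv d/2$ and $k_0,k_1$ lie in opposite cosets of $2\Bbb Z/d\Bbb Z$ (so $k_0+k_1$ is odd), this yields $a^3=(-1)^{k_0+k_1}=-1$ and $b=a^2(-1)^{k_0}=\pm a^2$. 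For the permutation data I would substitute into the formulas of Lemmas~\ref{L4.4} and \ref{L4.5}(i): adding the two congruences in \eqref{eq:L4.8} gives $3j_1\equiv-1\pmod d$, whence $a^{(q^2-1)/d}=a^{3(q-1)}=a^{-6}=(a^3)^{-2}=1$, so the constant $\alpha$ of Lemma~\ref{L4.4} vanishes. For $k\in K_0$ this makes $\pi(k)+e_kk=(-6j_1+r-2)k\equiv rk\pmod d$ (using $6j_1\equiv-2$), and for $k\in K_1$ Lemma~\ref{L4.5}(i) gives $\pi(k)+e_kk=0+rk=rk$ directly.

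Finally I would determine $q\bmod18$ and the shape of $h$. From $(q+1)/d=3$ with $d$ even we get $q\equiv5\pmod6$; solvability of $3j_1\equiv-1\pmod d$ forces $3\nmid d$, i.e. $9\nmid q+1$, which rules out $q\equiv17\pmod{18}$ and leaves $q\equiv5$ or $11\pmod{18}$. In each case I solve the unique $j_1\pmod d$ from $3j_1\equiv-1$ and the unique $j_2$ from $j_1+j_2+1\equiv d/2$, and reduce the two exponents $i_1+j_1(q+1)/d=1+3j_1$ and $i_2+j_2(q+1)/d=2+3j_2$ modulo $q+1$; a short computation returns $(q+1)/3$ and $(q+1)/6$ when $q\equiv11$, and $2(q+1)/3$ and $5(q+1)/6$ when $q\equiv5$, giving the two displayed trinomials. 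I expect the main obstacle to be the combinatorial covering step together with the care needed to see that the class of each $k$ (the value of $\tau_k$) is genuinely determined by $L_k$, so that $K_0$ and $K_1$ really are full cosets partitioning $\Bbb Z/d\Bbb Z$; once this is secured, everything else is bookkeeping with the congruences \eqref{eq:L4.8} and the relations \eqref{ab}.
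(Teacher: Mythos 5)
Your proposal is correct and follows essentially the same route as the paper: the same counting argument showing that $K_0$ and $K_1$ are cosets of proper subgroups whose sizes sum to $d$ (hence both index $2$, forcing $2j_1-j_2\equiv j_1+j_2+1\equiv d/2\pmod d$), the same use of \eqref{Claim4-1} to get $a^3=-1$ and $b=\pm a^2$, and the same congruence bookkeeping for $\pi(k)+e_kk$ and for the two cases of $q\bmod 18$. The only differences are presentational (e.g., you rule out $q\equiv 17\pmod{18}$ via solvability of $3j_1\equiv-1\pmod d$ where the paper splits directly on $1+3j_1\in\{d,2d\}$), and your explicit remark that $\tau_k$ is determined by $L_k$ is a harmless extra justification of the partition \eqref{K012}.
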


\begin{proof}
If $2j_1-j_2\equiv 0\pmod d$, then by \eqref{ab}, $b=a^{1-q}$. Moreover, by the proof of Lemma~\ref{L4.4}, for all $k\in\Bbb Z/d\Bbb Z$, $L_k\in\mathcal L_k(2,0;\lambda_k)$ with $\lambda_k=b^q\epsilon^{-j_2k}$. This means that Lemma~\ref{L4.4} occurs for all $k\in\Bbb Z/d\Bbb Z$, i.e., $K_0=\Bbb Z/d\Bbb Z$, which is a contradiction since $K_1\ne\emptyset$. Similarly, if $j_1+j_2+1\equiv 0\pmod d$, then by \eqref{ab}, $a=b^q$. Moreover, by the proof of Lemma~\ref{L4.5} (i), for all $k\in\Bbb Z/d\Bbb Z$, $L_k\in\mathcal L_k(2,2;1)$. This means that Lemma~\ref{L4.5} (i) occurs for all $k\in\Bbb Z/d\Bbb Z$, i.e., $K_1=\Bbb Z/d\Bbb Z$, which is also a contradiction. Hence $2j_1-j_2\not\equiv 0\pmod d$ and $j_1+j_2+1\not\equiv 0\pmod d$.

By \eqref{ab}, there exist $u,v\in\Bbb Z/d\Bbb Z$ such that
\[
\begin{cases}
(2j_1-j_2)k\equiv u\pmod d\quad \text{for all}\ k\in K_0,\cr
(j_1+j_2+1)k\equiv v\pmod d\quad \text{for all}\ k\in K_1.
\end{cases}
\]
Since $2j_1-j_2\not\equiv 0\pmod d$ and $j_1+j_2+1\not\equiv 0\pmod d$ and $K_0\cup K_1=\Bbb Z/d\Bbb Z$, we must have
\[
\begin{cases}
\displaystyle 2j_1-j_2\equiv\frac d2\pmod d,\vspace{0.4em}\cr
\displaystyle j_1+j_2+1\equiv \frac d2\pmod d,
\end{cases}
\]
$\{u,v\}=\{0,d/2\}$, and
\begin{gather*}
K_0=\{k\in\Bbb Z/d\Bbb Z:(2j_1-j_2)k\equiv u\pmod d\},\cr
K_1=\{k\in\Bbb Z/d\Bbb Z:(j_1+j_2+1)k\equiv v\pmod d\}.
\end{gather*}
It follows from \eqref{Claim4-1} that $a^3=\epsilon^{-u-v}=\epsilon^{d/2}=-1$ and $b=a^2\epsilon^u=\pm a^2$.

By Lemma~\ref{L4.4} and Lemma~\ref{L4.5} (i), we have
\[
\pi(k)+e_kk=\begin{cases}
(r-2-6j_1)k&\text{if}\ k\in K_0,\cr
rk&\text{if}\ k\in K_1.
\end{cases}
\]
By \eqref{eq:L4.8}, $3j_1+1\equiv 0\pmod d$, hence $\pi(k)+e_kk=rk$ for $k\in\Bbb Z/d\Bbb Z$.

System \eqref{eq:L4.8} is equivalent to
\[
\begin{cases}
1+3j_1\equiv 0\pmod d,\vspace{0.3em}\cr
\displaystyle 2+3j_2\equiv\frac d2\pmod d.
\end{cases}
\]
Since $0\le j_1,j_2<d$, we have
\[
\begin{cases}
1+3j_1=d,\vspace{0.3em}\cr
\displaystyle 2+3j_2=\frac d2,
\end{cases}
\quad\text{or}\quad
\begin{cases}
1+3j_1=2d,\vspace{0.3em}\cr
\displaystyle 2+3j_2=\frac{5d}2.
\end{cases}
\]
In the first case, $d\equiv 4\pmod 6$, whence $q=3d-1\equiv 11\pmod{18}$, and
\[
h(X)=1+aX^{1+3j_1}+bX^{2+3j_2}=1+aX^{(q+1)/3}+bX^{(q+1)/6}.
\]
In the second case, $d\equiv 2\pmod 6$, whence $q=3d-1\equiv 5\pmod{18}$, and
\[
h(X)=1+aX^{1+3j_1}+bX^{2+3j_2}=1+aX^{2(q+1)/3}+bX^{5(q+1)/6}.
\] 
\end{proof}

\begin{lem}\label{L4.9}
Assume that $K_0\ne\emptyset$ and $K_2\ne\emptyset$ in \eqref{K012} and hence $(q+1)/d=3$, $i_1=1$, $i_2=2$, and $r\equiv 0\pmod 3$. Then $d$ is even, and $K_0$ and $K_2$ form the two cosets of $2\Bbb Z/d\Bbb Z$ in $\Bbb Z/d\Bbb Z$. Moreover, $a^3=\pm1$, $b=-a^{-1}$,
\begin{equation}\label{class8-1}
\begin{cases}
\displaystyle 2j_1-j_2\equiv \frac d2\pmod d,\vspace{0.4em}\cr
\displaystyle 2j_2-j_1+1\equiv \frac d2\pmod d,
\end{cases}
\end{equation}
and $\pi(k)+e_kk=rk$ for $k\in\Bbb Z/d\Bbb Z$. More precisely, either 
\[
q\equiv 5\pmod{18}\quad\text{and}\quad h(X)=1+aX^{(q+1)/6}+bX^{5(q+1)/6}
\]
or
\[
q\equiv 11\pmod{18}\quad\text{and}\quad h(X)=1+aX^{5(q+1)/6}+bX^{(q+1)/6}.
\]
\end{lem}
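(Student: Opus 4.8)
The plan is to follow the proof of Lemma~\ref{L4.8} almost verbatim, with the pair $(K_0,K_1)$ replaced by $(K_0,K_2)$ and Lemma~\ref{L4.5}(i) replaced by Lemma~\ref{L4.5}(ii). First I would eliminate the degenerate exponents. If $2j_1-j_2\equiv 0\pmod d$, then \eqref{ab} gives $b=a^{1-q}$ and the computation in the proof of Lemma~\ref{L4.4} shows $L_k\in\mathcal L_k(2,0;\lambda_k)$ for \emph{every} $k$, i.e.\ $K_0=\Bbb Z/d\Bbb Z$, contradicting $K_2\ne\emptyset$; symmetrically, $2j_2-j_1+1\equiv 0\pmod d$ would force $a=b^{1-q}$ and, via the proof of Lemma~\ref{L4.5}(ii), $K_2=\Bbb Z/d\Bbb Z$, contradicting $K_0\ne\emptyset$. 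Hence neither $2j_1-j_2$ nor $2j_2-j_1+1$ vanishes in $\Bbb Z/d\Bbb Z$.

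Next I would run the counting argument. By \eqref{ab} the functions $k\mapsto(2j_1-j_2)k$ and $k\mapsto(2j_2-j_1+1)k$ are constant, say $\equiv u$ and $\equiv v$, on $K_0$ and $K_2$ respectively; thus $K_0$ and $K_2$ sit inside cosets of the two kernels, so $|K_0|\le\gcd(2j_1-j_2,d)$ and $|K_2|\le\gcd(2j_2-j_1+1,d)$. As $K_0\sqcup K_2=\Bbb Z/d\Bbb Z$ and each gcd is a proper divisor of $d$, the chain $d=|K_0|+|K_2|\le 2d/p$ (with $p$ the least prime factor of $d$) forces $p=2$ and both gcds equal to $d/2$. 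This yields $d$ even, the unique values $2j_1-j_2\equiv 2j_2-j_1+1\equiv d/2\pmod d$ of \eqref{class8-1}, the identity $\{u,v\}=\{0,d/2\}$, and the description of $K_0,K_2$ as the parity cosets of $2\Bbb Z/d\Bbb Z$.

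Then I would feed the two relations of \eqref{ab} into the equivalence \eqref{Claim4-2}, which yields $a^3=\epsilon^{-2u-v}$ and $b=a^2\epsilon^u$. Here the crucial point is the factor $2$ on $u$: combining with $u+v=d/2$ and $\epsilon^{d/2}=-1$ gives $ab=a^3\epsilon^u=\epsilon^{-u-v}=-1$, i.e.\ $b=-a^{-1}$, while $a^3=\epsilon^{-2u-v}$ equals $-1$ or $1$ according to whether $v$ or $u$ is $d/2$, so $a^3=\pm1$. For the permutation condition I would note that $d$ even and $d\mid q+1$ make $q$ odd and $(q^2-1)/d=3(q-1)$, so the exponents $\alpha,\beta$ from Lemmas~\ref{L4.4} and \ref{L4.5}(ii) satisfy $\epsilon^\alpha=(a^3)^{q-1}=1$ and $\epsilon^\beta=(b^3)^{q-1}=1$. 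Lemma~\ref{L4.4} then gives $\pi(k)+e_kk=(r-2-6j_1)k$ on $K_0$ and Lemma~\ref{L4.5}(ii) gives $(r-4-6j_2)k$ on $K_2$; since \eqref{class8-1} implies $6j_1+2\equiv 6j_2+4\equiv 0\pmod d$, both reduce to $rk$.

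Finally, to pin down the shape of $h(X)=1+aX^{1+3j_1}+bX^{2+3j_2}$ I would solve \eqref{class8-1} in the form $3j_1+1\equiv d/2$, $j_1+j_2+1\equiv 0\pmod d$. Since $0\le j_1<d$, the value $1+3j_1$ must be one of $d/2,3d/2,5d/2$, and $3d/2$ is ruled out modulo $3$; the residue $d\equiv 2\pmod 6$ forces $1+3j_1=d/2=(q+1)/6$ and $2+3j_2=5d/2=5(q+1)/6$ with $q=3d-1\equiv 5\pmod{18}$, while $d\equiv 4\pmod 6$ gives the mirror image and $q\equiv 11\pmod{18}$. I expect the proof to be routine once the framework of Lemma~\ref{L4.8} is reused; the one genuinely delicate point, and the main place an error could creep in, is tracking the extra factor of $2$ in \eqref{Claim4-2}, which is precisely what changes the conclusion from $a^3=-1,\ b=\pm a^2$ in Lemma~\ref{L4.8} to $a^3=\pm1,\ b=-a^{-1}$ here.
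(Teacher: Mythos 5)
Your proposal is correct and follows essentially the same route as the paper, which itself proves Lemma~\ref{L4.9} by repeating the argument of Lemma~\ref{L4.8} with Lemma~\ref{L4.5}(i) replaced by Lemma~\ref{L4.5}(ii) and \eqref{Claim4-1} replaced by \eqref{Claim4-2}. Your handling of the factor of $2$ in \eqref{Claim4-2} (giving $ab=\epsilon^{-u-v}=-1$ and $a^3=\epsilon^{-2u-v}=\epsilon^v=\pm1$) agrees with the paper's computation $b=a^2\epsilon^u=-a^5=-a^{-1}$, and your explicit check that $\epsilon^\alpha=\epsilon^\beta=1$ makes precise a step the paper leaves implicit.
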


\begin{proof}
By the same argument in the proof of Lemma~\ref{L4.8}, we have 
\[
\begin{cases}
\displaystyle 2j_1-j_2\equiv\frac d2\pmod d,\vspace{0.4em}\cr
\displaystyle 2j_2-j_1+1\equiv \frac d2\pmod d,
\end{cases}
\]
and 
\begin{gather*}
K_0=\{k\in\Bbb Z/d\Bbb Z:(2j_1-j_2)k\equiv u\pmod d\},\cr
K_2=\{k\in\Bbb Z/d\Bbb Z:(2j_2-j_1+1)k\equiv v\pmod d\},
\end{gather*}
where $\{u,v\}=\{0,d/2\}$. It follows from \eqref{Claim4-2} that $a^3=\epsilon^v=\pm 1$ and $b=a^2\epsilon^u=-a^2\epsilon^v=-a^5=-a^{-1}$. By Lemma~\ref{L4.4} and Lemma~\ref{L4.5} (ii), we have
\[
\pi(k)+e_kk=\begin{cases}
(r-2-6j_1)k&\text{if}\ k\in K_0,\cr
(r-4-6j_2)k&\text{if}\ k\in K_2.
\end{cases}
\]
By \eqref{class8-1}, $6j_1\equiv -2\pmod d$ and $6j_2\equiv -4\pmod d$, hence $\pi(k)+e_kk=rk$ for $k\in\Bbb Z/d\Bbb Z$.

System \eqref{class8-1} is equivalent to
\[
\begin{cases}
\displaystyle  1+3j_1\equiv \frac d2\pmod d,\vspace{0.3em}\cr
\displaystyle 2+3j_2\equiv\frac d2\pmod d,
\end{cases}
\]
i.e.,
\[
\begin{cases}
\displaystyle 1+3j_1=\frac d2,\vspace{0.3em}\cr
\displaystyle 2+3j_2=\frac {5d}2,
\end{cases}
\quad\text{or}\quad
\begin{cases}
\displaystyle 1+3j_1=\frac{5d}2,\vspace{0.3em}\cr
\displaystyle 2+3j_2=\frac d2.
\end{cases}
\]
In the first case, $d\equiv 2\pmod 6$, whence $q=3d-1\equiv 5\pmod{18}$, and
\[
h(X)=1+aX^{(q+1)/6}+bX^{5(q+1)/6}.
\]
In the second case, $d\equiv 4\pmod 6$, whence $q=3d-1\equiv 11\pmod{18}$, and
\[
h(X)=1+aX^{5(q+1)/6}+bX^{(q+1)/6}.
\] 
\end{proof}

\begin{lem}\label{L4.10}
Assume that $K_1\ne\emptyset$ and $K_2\ne\emptyset$ in \eqref{K012} and hence $(q+1)/d=3$, $i_1=1$, $i_2=2$, and $r\equiv -1\pmod 3$. Then $d$ is even, and $K_1$ and $K_2$ form the two cosets of $2\Bbb Z/d\Bbb Z$ in $\Bbb Z/d\Bbb Z$. Moreover, $b^3=-1$, $a=\pm b^{-1}$,
\begin{equation}\label{class9-1}
\begin{cases}
\displaystyle j_1+j_2+1\equiv \frac d2\pmod d,\vspace{0.4em}\cr
\displaystyle 2j_2-j_1+1\equiv \frac d2\pmod d,
\end{cases}
\end{equation}
and $\pi(k)+e_kk=rk$ for $k\in\Bbb Z/d\Bbb Z$. More precisely, either 
\[
q\equiv 5\pmod{18}\quad\text{and}\quad h(X)=1+aX^{(q+1)/6}+bX^{(q+1)/3}
\]
or
\[
q\equiv 11\pmod{18}\quad\text{and}\quad h(X)=1+aX^{5(q+1)/6}+bX^{2(q+1)/3}.
\]
\end{lem}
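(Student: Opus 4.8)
The plan is to follow the template of Lemmas~\ref{L4.8} and \ref{L4.9} closely, now pairing Lemma~\ref{L4.5}~(i), which governs $K_1$, with Lemma~\ref{L4.5}~(ii), which governs $K_2$, and invoking the equivalence \eqref{Claim4-3} of Lemma~\ref{L4.7} in place of \eqref{Claim4-1} and \eqref{Claim4-2}. First I would rule out the degenerate multipliers $j_1+j_2+1$ and $2j_2-j_1+1$. If $j_1+j_2+1\equiv 0\pmod d$, then the relation $a=b^q\epsilon^{-(j_1+j_2+1)k}$ in \eqref{ab} reads $a=b^q$ independently of $k$, and the proof of Lemma~\ref{L4.5}~(i) then produces $L_k\in\mathcal L_k(2,2;1)$ for every $k$, forcing $K_1=\Bbb Z/d\Bbb Z$ and contradicting $K_2\ne\emptyset$; symmetrically, $2j_2-j_1+1\equiv 0\pmod d$ gives $a=b^{1-q}$ for all $k$, hence $K_2=\Bbb Z/d\Bbb Z$ by the proof of Lemma~\ref{L4.5}~(ii), contradicting $K_1\ne\emptyset$. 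So both multipliers are nonzero modulo $d$.

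Next I would extract the coset structure. By \eqref{ab} there are $u,v\in\Bbb Z/d\Bbb Z$ with $(j_1+j_2+1)k\equiv u$ for all $k\in K_1$ and $(2j_2-j_1+1)k\equiv v$ for all $k\in K_2$; thus $K_1$ and $K_2$ are cosets of the subgroups annihilated by the two multipliers, of sizes $\gcd(j_1+j_2+1,d)$ and $\gcd(2j_2-j_1+1,d)$. Since $r\equiv -1\pmod 3$ forces $K_0=\emptyset$, these two sets partition $\Bbb Z/d\Bbb Z$, so the two gcd's sum to $d$. As each is a \emph{proper} divisor of $d$ (the multipliers being nonzero), this is possible only when $d$ is even and both gcd's equal $d/2$; equivalently $j_1+j_2+1\equiv 2j_2-j_1+1\equiv d/2\pmod d$, which is \eqref{class9-1}, with $\{u,v\}=\{0,d/2\}$ and $K_1,K_2$ the two cosets of $2\Bbb Z/d\Bbb Z$. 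Feeding these into \eqref{Claim4-3} gives $b^3=\epsilon^{-u-v}=\epsilon^{-d/2}=-1$ and $a=b^{-1}\epsilon^{-u}=\pm b^{-1}$.

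I would then verify $\pi(k)+e_kk=rk$ on each part. On $K_1$, Lemma~\ref{L4.5}~(i) gives $e_k=r$ and $\pi(k)=0$, so the identity is immediate. On $K_2$, Lemma~\ref{L4.5}~(ii) gives $e_k=r-1$ and $\pi(k)=-(2j_2+1)k\,(q+1)/d+\beta=-3(2j_2+1)k+\beta$, whence $\pi(k)+e_kk=(r-6j_2-4)k+\beta$. Here $\beta=0$, since $(q^2-1)/d=3(q-1)$ (as $(q+1)/d=3$) gives $b^{(q^2-1)/d}=(b^3)^{q-1}=(-1)^{q-1}=1$, using that $q=3d-1$ is odd once $d$ is even; and \eqref{class9-1} yields $2+3j_2\equiv 0\pmod d$, i.e. $6j_2+4\equiv 0\pmod d$. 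Both corrections vanish modulo $d$ and $\pi(k)+e_kk=rk$ throughout.

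Finally I would make $h$ explicit. Reducing \eqref{class9-1} as above produces the equivalent system $1+3j_1\equiv d/2$, $2+3j_2\equiv 0\pmod d$; imposing $0\le j_1,j_2<d$ leaves exactly two admissible solutions, namely $1+3j_1=d/2$ together with $2+3j_2=d$ when $d\equiv 2\pmod 6$, and $1+3j_1=5d/2$ together with $2+3j_2=2d$ when $d\equiv 4\pmod 6$, i.e. $q\equiv 5$ and $q\equiv 11\pmod{18}$ respectively. Substituting the exponents $i_1+j_1(q+1)/d=1+3j_1$ and $i_2+j_2(q+1)/d=2+3j_2$ into $h(X)=1+aX^{i_1+j_1(q+1)/d}+bX^{i_2+j_2(q+1)/d}$ then reads off the two stated trinomials. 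The only genuinely non-computational step is the counting argument in the second paragraph that forces $d$ even and both multipliers equal to $d/2$; the rest is the bookkeeping already rehearsed in Lemmas~\ref{L4.8} and \ref{L4.9}.
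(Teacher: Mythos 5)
Your proof is correct and follows essentially the same route as the paper's: it reuses the argument of Lemma~\ref{L4.8} (excluding the degenerate multipliers, then counting cosets to force $d$ even and both multipliers $\equiv d/2\pmod d$), applies \eqref{Claim4-3} to get $b^3=-1$ and $a=\pm b^{-1}$, and reduces \eqref{class9-1} to the two admissible $(j_1,j_2)$ pairs. The only (welcome) difference is that you explicitly verify $\beta=0$ via $b^{(q^2-1)/d}=(b^3)^{q-1}=1$, a step the paper leaves implicit when it writes $\pi(k)+e_kk=(r-4-6j_2)k$ on $K_2$.
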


\begin{proof}
Again, by the argument in the proof of Lemma~\ref{L4.8}, we have 
\[
\begin{cases}
\displaystyle j_1+j_2+1\equiv \frac d2\pmod d,\vspace{0.4em}\cr
\displaystyle 2j_2-j_1+1\equiv \frac d2\pmod d,
\end{cases}
\]
and 
\begin{gather*}
K_1=\{k\in\Bbb Z/d\Bbb Z:(j_1+j_2+1)k\equiv u\pmod d\},\cr
K_2=\{k\in\Bbb Z/d\Bbb Z:(2j_2-j_1+1)k\equiv v\pmod d\},
\end{gather*}
where $\{u,v\}=\{0,d/2\}$. It follows from \eqref{Claim4-3} that $b^3=\epsilon^{u+v}=-1$ and $a=b^{-1}\epsilon^u=\pm b^{-1}$. By  Lemma~\ref{L4.5} (i) and (ii), we have
\[
\pi(k)+e_kk=\begin{cases}
rk&\text{if}\ k\in K_1,\cr
(r-4-6j_2)k&\text{if}\ k\in K_2.
\end{cases}
\]
By \eqref{class9-1}, $6j_2\equiv -4\pmod d$, hence $\pi(k)+e_kk=rk$ for $k\in\Bbb Z/d\Bbb Z$. 

System \eqref{class9-1} is equivalent to
\[
\begin{cases}
\displaystyle  1+3j_1\equiv \frac d2\pmod d,\vspace{0.3em}\cr
\displaystyle 2+3j_2\equiv 0\pmod d,
\end{cases}
\]
i.e.,
\[
\begin{cases}
\displaystyle 1+3j_1=\frac d2,\vspace{0.3em}\cr
\displaystyle 2+3j_2=d,
\end{cases}
\quad\text{or}\quad
\begin{cases}
\displaystyle 1+3j_1=\frac{5d}2,\vspace{0.3em}\cr
\displaystyle 2+3j_2=2d.
\end{cases}
\]
In the first case, $d\equiv 2\pmod 6$, whence $q=3d-1\equiv 5\pmod{18}$, and
\[
h(X)=1+aX^{(q+1)/6}+bX^{(q+1)/3}.
\]
In the second case, $d\equiv 4\pmod 6$, whence $q=3d-1\equiv 11\pmod{18}$, and
\[
h(X)=1+aX^{5(q+1)/6}+bX^{2(q+1)/3}.
\] 
\end{proof}

\begin{rmk}\label{R4.11}\rm 
In Lemmas~\ref{L4.8} -- \ref{L4.10}, it is easy to see that the polynomial $h(X)$ satisfies 
\[
\text{gcd}(h(X),\,X^{q+1}-1)=1. 
\]
For example, in Lemma~\ref{L4.8}, with $q\equiv 11\pmod{18}$, we have 
\[
h(X)=1+aX^{(q+1)/3}\pm a^2X^{(q+1)/6},
\]
where $a^3=-1$. Assume to the contrary that $h(X)$ and $X^{q+1}-1$ have a common root $x\in\f_{q^2}$. Then $a^2x^{(q+1)/6}$ is a common root of  $1\pm X-X^2$ and $X^6-1$. This is impossible since $\text{gcd}(1\pm X-X^2,X^6-1)=1$.
\end{rmk}

\subsection{Four classes}\

All permutation trinomials resulting from Algorithm~\ref{Algo2.4} have been determined in Section~4.1. These permutation trinomials, excluding those that were previously known, can be categorized into four classes. Each class covers a situation described in a lemma or several lemmas in Section~4.1. Theorem~\ref{T2.3} is applied to the situation to set the conditions on the parameters. More precisely, these conditions are
\begin{itemize}
\item $\text{gcd}(r,q-1)=1$;
\item $\text{gcd}(e_k,(q+1)/d)=1$ for all $0\le k<d$;
\item $\text{gcd}(h(X),X^{q+1}-1)=1$ (cf. Remark~\ref{R2.5});
\item the map $k\mapsto \pi(k)+e_kk$ permutes $\Bbb Z/d\Bbb Z$.
\end{itemize}
In Class~4, which covers Lemmas~\ref{L4.8} -- \ref{L4.10}, the condition $\text{gcd}(e_k,(q+1)/d)=1$ ($0\le k<d$) is satisfied by the choice of $r\pmod 3$, and the condition $\text{gcd}(h(X),X^{q+1}-1)=1$ is automatically satisfied by Remark~\ref{R4.11}.

In each class, the permutation trinomial is
\[
X^rh(X^{q-1}),
\]
where
\[
h(X)=1+aX^{i_1+j_1(q+1)/d}+bX^{i_2+j_2(q+1)/d}.
\]

\medskip
{\bf Class 1.} (Case 2.1, Lemma~\ref{L4.1} (ii))

\medskip

{\bf Conditions:} $i_1=0<i_2<(q+1)/d$, $j_1=d/2$, $0\le j_2<d$, $a^{q-1}=-1$, $(1-a)/b\in\mu_{q+1}$,
\[
\text{gcd}(1+aX^{(q+1)/2}+bX^{i_2+j_2(q+1)/d},\;X^{q+1}-1)=1,
\]
$\text{gcd}(r,q-1)=1$, $\text{gcd}(r-i_2,(q+1)/d)=1$, and
\[
k\mapsto \Bigl(-j_2\frac{q+1}d+r-i_2\Bigr)k+\delta(k)v
\]
permutes $\Bbb Z/d\Bbb Z$, where
\[
\Bigl(\frac{1+a}{1-a}\Bigr)^{(q+1)/d}=\epsilon^v
\]
and 
\[
\delta(k)=\begin{cases}
0&\text{if $k$ is even},\cr
1&\text{if $k$ is odd}.
\end{cases}
\]

\medskip

{\bf PP:} $X^r(1+aX^{(q^2-1)/2}+bX^{(q-1)(i_2+j_2(q+1)/d)})$.

\medskip
{\bf Class 2.} (Case 2.2, Lemma~\ref{L4.3} (i))
\medskip

{\bf Conditions:} $i_1=0<i_2<(q+1)/d$, $j_1=d/2$, $0\le j_2<d$, $a=-1$, $(2/b)^{(q+1)/d}=\epsilon^\theta$ for some $\theta\in\Bbb Z/d\Bbb Z$,
\[
\text{gcd}(1-X^{(q+1)/2}+bX^{i_2+j_2(q+1)/d},\;X^{q+1}-1)=1,
\]
$\text{gcd}(r,q-1)=1$, $\text{gcd}(r-i_2,(q+1)/d)=\text{gcd}(r-2i_2,(q+1)/d)=1$, and
\[
k\mapsto
\begin{cases} 
\displaystyle \Bigl(-2j_2\frac{q+1}d+r-2i_2\Bigr)k+2\theta&\text{if}\ k\equiv0\pmod 2, \vspace{0.4em}\cr
\displaystyle \Bigl(-j_2\frac{q+1}d+r-i_2\Bigr)k+\theta&\text{if}\ k\not\equiv0\pmod 2
\end{cases}
\]
permutes $\Bbb Z/d\Bbb Z$.

\medskip

{\bf PP:} $X^r(1-X^{(q^2-1)/2}+bX^{(q-1)(i_2+j_2(q+1)/d)})$.

\medskip
{\bf Class 3.} (Case 2.2, Lemma~\ref{L4.3} (ii))

\medskip

{\bf Conditions:} $i_1=0<i_2<(q+1)/d$, $j_1=d/3$ or $2d/3$, $0\le j_2<d$, $a=-1$, $((1-\epsilon^{j_1})/b)^{(q+1)/d}=\epsilon^\eta$ for some $\eta\in\Bbb Z/b\Bbb Z$, 
\[
\text{gcd}(1-X^{j_1(q+1)/d}+bX^{i_2+j_2(q+1)/d},\;X^{q+1}-1)=1,
\]
$\text{gcd}(r,q-1)=1$, $\text{gcd}(r-i_2,(q+1)/d)=\text{gcd}(r-2i_2,(q+1)/d)=1$, and
\[
k\mapsto
\begin{cases} 
\displaystyle \Bigl(-2j_2\frac{q+1}d+r-2i_2\Bigr)k-j_1\frac{q+1}d+\frac{q+1}{\text{gcd}(2,d)}+2\eta&\text{if}\ k\equiv0\pmod 3, \vspace{0.4em}\cr
\displaystyle \Bigl(-j_2\frac{q+1}d+r-i_2\Bigr)k-j_1\frac{q+1}d+\frac{q+1}{\text{gcd}(2,d)}+\eta&\text{if}\ k\not\equiv0\pmod 3
\end{cases}
\]
permutes $\Bbb Z/d\Bbb Z$.

\medskip

{\bf PP:} $X^r(1-X^{j_1(q^2-1)/d}+bX^{(q-1)(i_2+j_2(q+1)/d)})$.

\medskip
\noindent{\bf Remark.} All permutation trinomials in \cite[\S 2]{Qin-Yan-AAECC-2021} are covered by Classes 1 and 2 up to equivalence. All permutation trinomials in \cite[\S 3]{Qin-Yan-AAECC-2021} are covered by Classe 3 (with even $q$) up to equivalence.

\medskip
{\bf Class 4.} (Case 3, Lemmas~\ref{L4.8} -- \ref{L4.10}) Conditions on $q$ and $r$ and the expressions of $h(X)$ in this class are given in Table~\ref{Tb-cls7}. There are six cases in Table~\ref{Tb-cls7} according to $q\pmod{18}$ and $r\pmod 3$. However, the resulting PP, $X^rh(X^{q-1})$, modulo $X^{q^2-1}-1$, has only two cases according to $q\pmod{18}$. More precisely, let $q$, $r$ and $h(X)$ be from Table~\ref{Tb-cls7} and let $m=(q^2-1)/6$. If $q\equiv 5\pmod{18}$, then
\[
X^rh(X^{q-1})\equiv uX^s(1+cX^m-c^2X^{2m})\pmod{X^{q^2-1}-1},
\]
for some $s\equiv -1\pmod 3$, $u\in\f_{q^2}^*$ and $c\in\mu_6$.
If $q\equiv 11\pmod{18}$, then
\[
X^rh(X^{q-1})\equiv uX^s(1+cX^m-c^2X^{2m})\pmod{X^{q^2-1}-1},
\]
for some $s\equiv 1\pmod 3$, $u\in\f_{q^2}^*$ and $c\in\mu_6$.

To verify the above claim, let $l=(q+1)/6$. When $q\equiv 5\pmod{18}$ and $r\equiv 0\pmod 3$,
\[
h(X)\equiv 1+aX^{l}-a^{-1}X^{5l}\equiv -a^{-1}X^{5l}(1+cX^l-c^2X^{2l})\pmod{X^{q+1}-1},
\]
where $c=-a\in\mu_6$. Hence
\[
X^rh(X^{q-1})\equiv -a^{-1}X^{r+5l(q-1)}(1+cX^m-c^2X^{2m})\pmod{X^{q^2-1}-1},
\]
where $r+5l(q-1)\equiv -1\pmod 3$.

When $q\equiv 5\pmod{18}$ and $r\equiv 1\pmod 3$,
\[
h(X)\equiv 1+aX^{4l}\pm a^{2}X^{5l}\equiv aX^{4l}(1+cX^l-c^2X^{2l})\pmod{X^{q+1}-1},
\]
where $c=\pm a\in\mu_6$. Hence
\[
X^rh(X^{q-1})\equiv aX^{r+4l(q-1)}(1+cX^m-c^2X^{2m})\pmod{X^{q^2-1}-1},
\]
where $r+4l(q-1)\equiv -1\pmod 3$.

For the remaining cases in Table~\ref{Tb-cls7}, the claim is verified similarly. 

\begin{table}[ht]
\caption{$q$, $r$ and $h(X)$ in Class~7}\label{Tb-cls7}
   \renewcommand*{\arraystretch}{1.4}
   \vskip-1em
    \centering
     \begin{tabular}{l|l|l} 
         \hline
         \hfil $q$ & \hfil $r$  & \hfil $h(X)$ \\ \hline
         & $r\equiv 0\pmod 3$ & $h(X)=1+aX^{(q+1)/6}- a^{-1}X^{5(q+1)/6}$\\
         & $\text{gcd}(r,q-1)=1$ & $a^3=\pm1$ \\ \cline{2-3}
         $q\equiv 5\pmod{18}$ & $r\equiv 1\pmod 3$ & $h(X)=1+aX^{2(q+1)/3}\pm a^{2}X^{5(q+1)/6}$\\
         & $\text{gcd}(r,q-1)=1$ & $a^3=-1$ \\ \cline{2-3}
         & $r\equiv -1\pmod 3$ & $h(X)=1\pm b^2X^{(q+1)/6}+bX^{(q+1)/3}$\\
         & $\text{gcd}(r,q-1)=1$ & $b^3=-1$ \\
         \hline
         & $r\equiv 0\pmod 3$ & $h(X)=1+aX^{5(q+1)/6}- a^{-1}X^{(q+1)/6}$\\
         & $\text{gcd}(r,q-1)=1$ & $a^3=\pm1$ \\ \cline{2-3}
         $q\equiv 11\pmod{18}$ & $r\equiv 1\pmod 3$ & $h(X)=1+aX^{(q+1)/3}\pm a^{2}X^{(q+1)/6}$\\
         & $\text{gcd}(r,q-1)=1$ & $a^3=-1$ \\ \cline{2-3}
         & $r\equiv -1\pmod 3$ & $h(X)=1\pm b^2X^{(5q+1)/6}+bX^{2(q+1)/3}$\\
         & $\text{gcd}(r,q-1)=1$ & $b^3=-1$ \\
         \hline
     \end{tabular}
\end{table}

\subsection{Examples}\

We give an example in each of the first three classes in Section~4.2. (Note that Class~4 is already explicit.) These are rather simple examples and their primary purpose is to show that none of these classes is empty. Interested readers may explore more elaborate examples as they wish.

\begin{exmp}[Class 1]\rm
Let $q\equiv 1\pmod 4$, $d=2$, $i_1=0$, $i_2=1$, $j_1=1$, $j_2=0$. Let $a\in\f_{q^2}^*$ be such that $a^{q-1}=-1$ and
\[
\Bigl(\frac{1+a}{1-a}\Bigr)^{(q+1)/2}=-1.
\]
To see that such $a$ exists, first choose $a_0\in\f_{q^2}^*$ such that $a_0^{q-1}=-1$ and let $a=ta_0$, $t\in\f_q^*$. Since $((1+a)/(1-a))^{q+1}=1$, we have $((1+a)/(1-a))^{(q+1)/2}=\pm1$, i.e.,
\[
\Bigl(\frac{1+ta_0}{1-ta_0}\Bigr)^{(q+1)/2}=\pm 1.
\]
The equation
\[
\Bigl(\frac{1+ta_0}{1-ta_0}\Bigr)^{(q+1)/2}=1
\]
has at most $(q+1)/2$ solutions for $t$, where $(q+1)/2<q-1$. Hence there exists $t\in\f_q^*$ such that
\[
\Bigl(\frac{1+ta_0}{1-ta_0}\Bigr)^{(q+1)/2}=-1.
\]
Let $b\in\f_{q^2}^*$ be such that $(1-a)/b=-1$. Assume that $\text{gcd}(r,q-1)=1$ and $\text{gcd}(r-1,(q+1)/2)=1$ ($q=5$ and $r=3$  satisfy these conditions). We have 
\[
h(X)=1+aX^{i_1+j_1(q+1)/2}+bX^{i_2+j_2(q+1)/2}=1+aX^{(q+1)/2}-(1-a)X.
\]
We claim that $\text{gcd}(h(X),X^{q+1}-1)=1$. Assume to the  contrary that $h(X)$ and $X^{q+1}-1$ have a common root $x$. Then $x^{(q+1)/2}=\pm 1$. If $x^{(q+1)/2}=1$, then $x=(1+a)/(1-a)$, whence $x^{(q+1)/2}=((1+a)/(1-a))^{(q+1)/2}=-1$, which is a contradiction. If $x^{(q+1)/2}=-1$, then $x=(1-a)/(1-a)=1$, which is also a contradiction.

In the notation of Class 1,
\[
k\mapsto \Bigl(-j_2\frac{q+1}d+r-i_2\Bigr)k+\delta(k)v=k,
\]
which permutes $\Bbb Z/2\Bbb Z$. Therefore, 
\[
X^rh(X^{q-1})=X^r(1+aX^{(q^2-1)/2}+(1-a)X^{q-1})
\]
is a PP of $\f_{q^2}$.
\end{exmp}

\begin{exmp}[Class 2]\rm
Let $q\equiv 1\pmod 4$, $d=2$, $i_1=0$, $0<i_2<(q+1)/2$, $i_2$ even, $j_1=1$, $j_2=0$. Let $a=-1$ and $b\in\f_{q^2}^*$ be such that $(2/b)^{(q+1)/2}=1$. Assume that $\text{gcd}(r,q-1)=1$, $\text{gcd}(r-i_2,(q+1)/2)=1$ and $\text{gcd}(r-2i_2,(q+1)/2)=1$ ($q=5$, $r=3$ and $i_2=2$ satisfy these conditions). We have
\[
h(X)=1-X^{(q+1)/2}+bX^{i_2}.
\]
We claim that $\text{gcd}(h(X),X^{q+1}-1)=1$. Assume to the contrary that $h(X)$ and $X^{q+1}-1$ have a common root $x$. Then $x^{(q+1)/2}=\pm 1$. If $x^{(q+1)/2}=1$, then $0=h(x)=bx^{i_2}$, which is a contradiction. If $x^{(q+1)/2}=-1$, then $x^{i_2}=-2/b$, whence $1=(x^{(q+1)/2})^{i_2}=(x^{i_2})^{(q+1)/2}=(-2/b)^{(q+1)/2}=-1$, which is also a contradiction.

In the notation of Class 2,
\begin{align*}
k\mapsto\,&
\begin{cases} 
\displaystyle \Bigl(-2j_2\frac{q+1}d+r-2i_2\Bigr)k+2\theta&\text{if}\ k\equiv0\pmod 2 \vspace{0.4em}\cr
\displaystyle \Bigl(-j_2\frac{q+1}d+r-i_2\Bigr)k+\theta&\text{if}\ k\not\equiv0\pmod 2
\end{cases}\cr
&=k,
\end{align*}
which permutes $\Bbb Z/2\Bbb Z$. Therefore, 
\[
X^rh(X^{q-1})=X^r(1-X^{(q^2-1)/2}+bX^{i_2(q-1)})
\]
is a PP of $\f_{q^2}$.
\end{exmp}

\begin{exmp}[Class 3]\rm
Let $q=2^{2n+1}$, $d=3$, $i_1=0$, $0<i_2<(q+1)/3$, $j_1=1$, $j_2=0$. Let $a=-1$ and $b\in\f_{q^2}^*$ be such that $((1-\epsilon)/b)^{(q+1)/3}=1$, where $\epsilon$ is an element of $\f_{q^2}^*$ of order $3$. Assume that $i_2\not\equiv 0,r,(q+1)/3\pmod 3$, $\text{gcd}(r,q-1)=1$, $\text{gcd}(r-i_2,(q+1)/3)=1$ and $\text{gcd}(r-2i_2,(q+1)/3)=1$  ($q=2^3$, $r=3$ and $i_2=1$ satisfy these conditions). We have
\[
h(X)=1-X^{(q+1)/3}+bX^{i_2}.
\]
We claim that $\text{gcd}(h(X),X^{q+1}-1)=1$. Assume to the contrary that $h(X)$ and $X^{q+1}-1$ have a common root $x$. Then $x^{(q+1)/3}=1$, $\epsilon$ or $\epsilon^{-1}$. If $x^{(q+1)/3}=1$, then $0=h(x)=bx^{i_2}$, which is a contradiction. If $x^{(q+1)/3}=\epsilon$, then $x^{i_2}=(1-\epsilon)/b$, whence $\epsilon^{i_2}=(x^{i_2})^{(q+1)/3}=((1-\epsilon)/b)^{(q+1)/3}=1$, which is a contradiction. If $x^{(q+1)/3}=\epsilon^{-1}$, then $x^{i_2}=(1-\epsilon^{-1})/b=\epsilon^{-1}(1-\epsilon)/b$, whence $\epsilon^{-i_2}=\epsilon^{-(q+1)/3}$, which is also a contradiction.

In the notation of Class 3,
\begin{align*}
k\mapsto\,&
\begin{cases} 
\displaystyle \Bigl(-2j_2\frac{q+1}d+r-2i_2\Bigr)k-j_1\frac{q+1}d+\frac{q+1}{\text{gcd}(2,d)}+2\eta&\text{if}\ k\equiv0\pmod 3 \vspace{0.4em}\cr
\displaystyle \Bigl(-j_2\frac{q+1}d+r-i_2\Bigr)k-j_1\frac{q+1}d+\frac{q+1}{\text{gcd}(2,d)}+\eta&\text{if}\ k\not\equiv0\pmod 3
\end{cases}\cr
&=(r-i_2)k-j_1\frac{q+1}3,
\end{align*}
which permutes $\Bbb Z/3\Bbb Z$. Therefore, 
\[
X^rh(X^{q-1})=X^r(1-X^{(q^2-1)/3}+bX^{i_2(q-1)})
\]
is a PP of $\f_{q^2}$.
\end{exmp}

\section{Additional Examples}

In this section we give a few examples using the forward approach of Algorithm~\ref{Algo2.4}. We continue to follow the notation of Algorithm~\ref{Algo2.4}.

\begin{exmp}\label{E5.1}\rm
Let $r$ and $q$ be such that $\text{gcd}(r,q-1)=1$. Let $d=1$, $s_0=0$ (i.e., $h(0)\ne 0$), $0\le t_0<q+1$, $\tau_0=0$, and  $e_0=r-t_0$ be such that $\text{gcd}(e_0,q+1)=1$. Let $h\in\mathcal L_0(t_0,0;\lambda_0)$, that is, $h\in\f_{q^2}[X]$ is a self-dual polynomial of degree $t_0$ such that $\text{gcd}(h,X^{q+1}-1)=1$. Then $X^rh(X^{q-1})$ is a PP of $\f_{q^2}$. This is the PP in \cite[Theorem~5.1]{Zieve-arXiv1310.0776}.
\end{exmp}

\begin{exmp}\label{E4.2}\rm
Let $r$ and $q$ be such that $\text{gcd}(r,q-1)=1$. Let $d=1$, $s_0=0$ (i.e., $h(0)\ne 0$), $0\le t_0<q+1$, $q+1-t_0\le \tau_0\le t_0$ and $e_0=r-t_0+\tau_0$ be such that $\text{gcd}(e_0,q+1)=1$. Let $h\in\mathcal L_0(t_0,\tau_0;1)$, that is, $h=P+X^{q+1-\tau_0}Q$, where $P,Q\in\f_{q^2}[X]$, $\deg P=t_0-\tau_0$, $\tilde P=P$, $\deg Q=\tau_0+t_0-(q+1)$, $\tilde Q=Q$, $\text{gcd}(h,X^{q+1}-1)=1$. Then $X^rh(X^{q-1})$ is a PP of $\f_{q^2}$. This construction does not seem to have appeared in the literature.
\end{exmp}

As an explicit instance of Example~\ref{E4.2}, let's consider the following situation: Let $q\ge 5$ be odd, $t_0=(q+5)/2$, $\tau_0=(q+1)/2$, $\text{gcd}(r,q-1)=\text{gcd}(r-2,q+1)=1$, $P=1+X^2$, $Q=a^q+aX^2$, where $a\in\f_{q^2}^*$ is such that $(1+a)^{(q^2-1)/2}=(1-a)^{(q^2-1)/2}=(-1)^{(q-1)/2}$. The number of such elements $a$ is $(q^2-1)/4$; see Lemma~\ref{LL4.3} below. Let
\[
h(X)=P(X)+X^{(q+1)/2}Q(X)=1+X^2+X^{(q+1)/2}(a^q+aX^2).
\]
We claim that $\text{gcd}(h(X),X^{q+1}-1)=1$. Assume to the contrary that $h(X)$ and $X^{q+1}-1$ have a common root $x$. Then
\[
x^{(q+1)/2}=-\frac{1+x^2}{a^q+ax^2},
\]
whence
\[
1=x^{q+1}=\Bigl(\frac{1+x^2}{a^q+ax^2}\Bigr)^2.
\]
Thus
\[
\frac{1+x^2}{a^q+ax^2}=\pm1,
\]
giving
\[
x^2=-\frac{1\mp a^q}{1\mp a}=-(1\mp a)^{q-1}.
\]
Therefore,
\[
1=(x^2)^{(q+1)/2}=\bigl(-(1\mp a)^{q-1}\bigr)^{(q+1)/2}=(-1)^{(q+1)/2}(1\mp a)^{(q^2-1)/2}=-1,
\]
which is a contradiction. 

Therefore,
\[
X^rh(X^{q-1})=X^r(1+X^{2(q-1)}+a^qX^{(q^2-1)/2}+aX^{(q+5)(q-1)})
\]
is a PP of $\f_{q^2}$.

\begin{lem}\label{LL4.3}
Let $q$ be odd and 
\[
\mathcal A=\{a\in\f_{q^2}^*: (1+a)^{(q^2-1)/2}=(1-a)^{(q^2-1)/2}=(-1)^{(q-1)/2}\}.
\]
Then $|\mathcal A|=(q^2-1)/4$.
\end{lem}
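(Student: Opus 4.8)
The plan is to recognize the exponent $(q^2-1)/2$ as the quadratic character of $\f_{q^2}$ and to convert the counting problem into the evaluation of quadratic character sums. Write $\chi$ for the quadratic (multiplicative) character of $\f_{q^2}$, extended by $\chi(0)=0$, so that $z^{(q^2-1)/2}=\chi(z)$ for every $z\in\f_{q^2}^*$, and set $c=(-1)^{(q-1)/2}\in\{\pm1\}$. Then $\mathcal A$ is exactly the set of $a\in\f_{q^2}$ with $a\neq0$ and $\chi(1+a)=\chi(1-a)=c$, the conditions $1\pm a\neq0$ being forced automatically by $\chi(1\pm a)=c\neq0$.

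Before summing, I would record two elementary facts that ultimately control the final constant. First, since $q$ is odd we have $4\mid q^2-1$, so $(q^2-1)/2$ is even and hence $\chi(-1)=1$. Second, because $(q^2-1)/2=(q-1)\cdot\tfrac{q+1}2$ is a multiple of $q-1$, every $y\in\f_q^*$ satisfies $y^{(q^2-1)/2}=(y^{q-1})^{(q+1)/2}=1$; thus $\f_q^*\subseteq(\f_{q^2}^*)^2$, and in particular $\chi(2)=1$.

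Next I would introduce the indicator $\tfrac12\bigl(1+c\,\chi(z)\bigr)$, which equals $1$ when $\chi(z)=c$ and $0$ when $\chi(z)=-c$ (valid for $z\neq0$). Summing the product of the two indicators over all $a$ with $1\pm a\neq0$, i.e.\ over $a\neq\pm1$, produces
\[
T=\sum_{a\neq\pm1}\frac{\bigl(1+c\,\chi(1+a)\bigr)\bigl(1+c\,\chi(1-a)\bigr)}{4}.
\]
Expanding $T$ gives four pieces: the constant term $\tfrac14(q^2-2)$; the two linear sums $\sum_{a\neq\pm1}\chi(1\pm a)$, which after reindexing and using $\sum_{z}\chi(z)=0$ each equal $-\chi(2)=-1$; and the cross term $\sum_{a\neq\pm1}\chi(1+a)\chi(1-a)=\sum_{a}\chi(1-a^2)$, which I would evaluate to $-\chi(-1)=-1$ by the standard formula for $\sum_x\chi(\text{quadratic})$ (equivalently, by counting the affine points of the conic $X^2+Y^2=1$ over $\f_{q^2}$). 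Finally, since $\mathcal A$ lives in $\f_{q^2}^*$, I would pass from $T$ to $|\mathcal A|$ by removing the contribution of $a=0$, which $T$ counts precisely when $\chi(1)=c$, that is, when $c=1$.

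The main obstacle is not any individual character sum, each of which is routine, but the careful bookkeeping at the boundary elements $a\in\{0,1,-1\}$ in combination with the exact values $\chi(-1)=\chi(2)=1$. The naive expansion treats $a=\pm1$ (where $1\mp a=0$, so the indicator formula fails) and $a=0$ (which must be deleted from $\f_{q^2}^*$) incorrectly, and the correction at $a=0$ is present exactly when $c=1$, i.e.\ when $q\equiv1\pmod4$. Consequently the substitution $\chi(-1)=\chi(2)=1$ makes the four-fold splitting of $\{a:a\neq\pm1\}$ asymmetric, and the genuine content of the lemma is to verify that, after the boundary corrections, the count collapses to the stated value uniformly in $q$. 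I would therefore concentrate the argument on this reconciliation, checking the dependence on $q\bmod4$ explicitly rather than relying on the generic value of the splitting.
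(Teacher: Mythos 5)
Your setup is correct and each of the three character sums is evaluated correctly: writing $c=(-1)^{(q-1)/2}$ and using $\chi(2)=\chi(-1)=1$, the expansion gives
\[
T=\sum_{a\neq\pm1}\frac{\bigl(1+c\,\chi(1+a)\bigr)\bigl(1+c\,\chi(1-a)\bigr)}{4}
=\frac{(q^2-2)-2c-1}{4}=\frac{q^2-3-2c}{4},
\]
and after removing $a=0$ (which is counted in $T$ exactly when $c=1$) you get $|\mathcal A|=(q^2-1)/4$ when $q\equiv3\pmod4$ but $|\mathcal A|=(q^2-9)/4$ when $q\equiv1\pmod4$. So the reconciliation you defer to the end does \emph{not} collapse to the stated value uniformly in $q$, and it cannot be made to: for $q=5$ one checks directly (with $\f_{25}=\f_5(\omega)$, $\omega^2=2$) that $\mathcal A=\{2,3,\omega,-\omega\}$, so $|\mathcal A|=4=(25-9)/4$ rather than $(25-1)/4=6$. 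The gap in your write-up is that you stop just before this final step; had you carried it out you would have found that the lemma as stated fails for $q\equiv1\pmod4$.

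The disagreement traces to the paper's own proof, not to your method. The paper parametrizes the two conditions by the conic $\mathcal X=\{(x,y):x^2+y^2=2u\}$ via $1+a=u^{-1}x^2$, $1-a=u^{-1}y^2$, and in the case where $u$ is a square (i.e.\ $q\equiv1\pmod4$) it adds the term $\tfrac12|\mathcal X_3|=2$ contributed by the points with $x=0$ or $y=0$. Those points correspond to $a=\mp1$, for which one of $(1\pm a)^{(q^2-1)/2}$ equals $0\neq(-1)^{(q-1)/2}$, so these $a$ do not belong to $\mathcal A$ and should be discarded, leaving $|\mathcal A|=\tfrac14|\mathcal X_1|=(q^2-9)/4$ in that case. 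The corrected count, $(q^2-1)/4$ for $q\equiv3\pmod4$ and $(q^2-9)/4$ for $q\equiv1\pmod4$, is still positive for every odd $q$ under consideration, so the only use made of the lemma (guaranteeing the existence of a suitable $a$ in the explicit instance preceding it) is unaffected; but you should record the corrected value rather than force your computation to match the stated one.
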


\begin{proof}
Choose $u\in\f_{q^2}^*$ such that $u^{(q^2-1)/2}=(-1)^{(q-1)/2}$, and let 
\[
\mathcal X=\{(x,y)\in\f_{q^2}^2: x^2+y^2=2u\}.
\]
By \cite[Lemma~6.55]{Hou-ams-gsm-2018} or \cite[Lemma~6.24]{Lidl-Niederreiter-FF-1997}, $|\mathcal X|=q^2-1$. Note that for $a\in\f_{q^2}$,
\begin{align*}
&(1+a)^{(q^2-1)/2}=(1-a)^{(q^2-1)/2}=(-1)^{(q-1)/2}\cr
\Leftrightarrow\ & 
\begin{cases}
1+a=u^{-1}x^2\cr
1-a=u^{-1}y^2
\end{cases}\ \text{for some}\ (x,y)\in\mathcal X.
\end{align*}
If $u$ is a nonsquare of $\f_{q^2}$, then
\[
|\mathcal A|=\frac 14|\mathcal X|=\frac14(q^2-1).
\]
If $u$ is a square of $\f_{q^2}$, partition $\mathcal X$ as $\mathcal X=\mathcal X_1\sqcup\mathcal X_2\sqcup\mathcal X_3$, where
\begin{align*}
\mathcal X_1\,&=\{(x,y)\in\mathcal X: x^2\ne 0, u;\ y^2\ne 0,u\},\cr
\mathcal X_2\,&=\{(x,y)\in\mathcal X: x^2=y^2=u\},\cr
\mathcal X_3\,&=\{(x,y)\in\mathcal X: x=0\ \text{or}\ y=0\}.
\end{align*}
Then $|\mathcal X_1|+|\mathcal X_2|+|\mathcal X_3|=q^2-1$, $|\mathcal X_2|=4$ and $|\mathcal X_3|=4$. Hence
\[
|\mathcal A|=\frac 14|\mathcal X_1|+\frac12|\mathcal X_3|=\frac14(q^2-1-8)+2=\frac 14(q^2-1).
\]
\end{proof}

We conclude this section with a random concrete example.

\begin{exmp}\label{E5.4}\rm
Let $q=47$, $d=6$, $r=3$, so $(q+1)/d=8$. Let $\gamma$ be a primitive element of $\f_{47^2}$ with minimal polynomial $X^2+X+13$ over $\f_{47}$ and let $\epsilon=\gamma^{(q^2-1)/d}=\gamma^{46\cdot 8}$. Choose sequences $s_k$, $t_k$, $\tau_k$, $\pi(k)$, and $\lambda_k$ as shown in Table~\ref{Tb1}.

\begin{table}[ht]
\caption{Sequences in Example~\ref{E5.4}}\label{Tb1}
   \renewcommand*{\arraystretch}{1.4}
   \vskip-1em
    \centering
     \begin{tabular}{c|ccccccc}  
         \hline
         $k$  & $s_k$ & $t_k$ & $\tau_k$ & $e_k=r-2s_k-t_k+\tau_k$ & $e_kk$ & $\pi(k)$ & $\lambda_k$ \\    \hline
		$0$ & $0$ & $0$ & $0$ & $3$ & $0$ & $0$ & $1$ \\
		$1$ & $1$ & $5$ & $5$ & $1$ & $1$ & $2$ & $\gamma^{46\cdot 2}$ \\
		$2$ & $2$ & $2$ & $0$ & $-3$ & $0$ & $1$ & $\gamma^{46}$ \\
		$3$ & $0$ & $5$ & $3$ & $1$ & $3$ & $1$ & $\gamma^{46}$ \\
		$4$ & $3$ & $0$ & $0$ & $-3$ & $0$ & $2$ & $\gamma^{46\cdot 2}$ \\
		$5$ & $0$ & $7$ & $1$ & $-3$ & $3$ & $2$ & $\gamma^{46\cdot 2}$ \\
           \hline
     \end{tabular}
\end{table}

Choose 
\begin{align*}
&L_0=1\in\mathcal L_0(0,0;1),\cr
&L_1=\gamma^2+X^3(\gamma^{46\cdot 38}+X^2)\in\mathcal L_1(5,5;\gamma^{46\cdot 2}),\cr
&L_2=\gamma^{46\cdot 47}+X^2\in\mathcal L_2(2,0;\gamma^{46}),\cr
&L_3=\gamma^{46\cdot 47}+X^2+X^5\cdot\gamma^{25}\in\mathcal L_3(5,3;\gamma^{46}),\cr
&L_4=\gamma^2\in\mathcal L_4(0,0;\gamma^{46\cdot 2}),\cr
&L_5=\gamma^{46\cdot 46}+X^6+X^7\cdot\gamma^{42}\in\mathcal L_5(7,1;\gamma^{46\cdot 2}).
\end{align*}
We have
\begin{align*}
&\sum_iM_{i0}X^i=L_0=1,\cr
&\sum_iM_{i1}X^i=XL_1=\gamma^2X+\gamma^{46\cdot 38}X^4+X^6,\cr
&\sum_iM_{i2}X^i=X^2L_2=\gamma^{46\cdot 47}X^2+X^4,\cr
&\sum_iM_{i3}X^i=L_3=\gamma^{46\cdot 47}+X^2+\gamma^{25}X^5,\cr
&\sum_iM_{i4}X^i=X^3L_4=\gamma^2X^3,\cr
&\sum_iM_{i5}X^i=L_5=\gamma^{46\cdot 46}+X^6+\gamma^{42}X^7.
\end{align*}
Hence
\[
[M_{ik}]=\left[\begin{matrix}
1&0&0&\gamma^{46\cdot 47}&0&\gamma^{46\cdot 46}\cr
0&\gamma^2&0&0&0&0\cr
0&0&\gamma^{46\cdot 47}&1&0&0\cr
0&0&0&0&\gamma^2&0\cr
0&\gamma^{46\cdot 38}&1&0&0&0\cr
0&0&0&\gamma^{25}&0&0\cr
0&1&0&0&0&1\cr
0&0&0&0&0&\gamma^{42}
\end{matrix}\right]
\]
and
\begin{align*}
[a_{ij}]\,&=\frac 16[M_{ik}][\gamma^{-46\cdot 8kj}]\cr
&=\left[
\begin{matrix}
 11+37 \gamma & 11+25 \gamma & 31+44 \gamma & 5+10 \gamma & 5+22 \gamma & 32+3 \gamma \\
 37+39 \gamma & 12+32 \gamma & 22+40 \gamma & 10+8 \gamma & 35+15 \gamma & 25+7 \gamma \\
 3 \gamma & 19+32 \gamma & 36+12 \gamma & 31+3 \gamma & 35+32 \gamma & 20+12 \gamma \\
 37+39 \gamma & 35+15 \gamma & 22+40 \gamma & 37+39 \gamma & 35+15 \gamma & 22+40 \gamma \\
 24+6 \gamma & 24+18 \gamma & 9+14 \gamma & 39+41 \gamma & 14+27 \gamma & 31+35 \gamma \\
 43+36 \gamma & 4+11 \gamma & 43+36 \gamma & 4+11 \gamma & 43+36 \gamma & 4+11 \gamma \\
 16 & 8 & 39 & 31 & 39 & 8 \\
 42+16 \gamma & 27+24 \gamma & 32+8 \gamma & 5+31 \gamma & 20+23 \gamma & 15+39 \gamma \\
\end{matrix}\right].
\end{align*}
In conclusion,
\[
X^3\sum_{\substack{0\le i<8\cr 0\le j<6}}a_{ij}X^{46(i+8j)}
\]
is a PP of $\f_{47^2}$.
\end{exmp}



\end{document}